\documentclass[12pt]{amsart}
\usepackage{graphicx, overpic}
\usepackage[below]{placeins}
\usepackage[colorlinks=true, linkcolor=blue, citecolor=blue]{hyperref}
\usepackage[]{algorithm2e}
\usepackage[textsize=tiny]{todonotes}
\usepackage[T1]{fontenc}

\usepackage{amsmath,amsthm,amscd,amssymb,eucal}

\usepackage{enumerate, amsfonts, latexsym, color, url}
\usepackage{fancyvrb}
\CustomVerbatimCommand{\codestyle}{Verb}{formatcom=\ttfamily}
\usepackage{epstopdf}

\usepackage{pinlabel}

\makeatletter
\newsavebox{\@brx}
\newcommand{\llangle}[1][]{\savebox{\@brx}{\(\m@th{#1\langle}\)}%
  \mathopen{\copy\@brx\kern-0.5\wd\@brx\usebox{\@brx}}}
\newcommand{\rrangle}[1][]{\savebox{\@brx}{\(\m@th{#1\rangle}\)}%
  \mathclose{\copy\@brx\kern-0.5\wd\@brx\usebox{\@brx}}}
\makeatother

\begin{document}

\newtheorem{theorem}{Theorem}[section]
\newtheorem{lemma}[theorem]{Lemma}
\newtheorem{proposition}[theorem]{Proposition}
\newtheorem{corollary}[theorem]{Corollary}
\newtheorem{conjecture}[theorem]{Conjecture}
\newtheorem{question}[theorem]{Question}
\newtheorem*{qquestion}{Question}
\newtheorem{problem}[theorem]{Problem}
\newtheorem*{claim}{Claim}
\newtheorem*{criterion}{Criterion}

\theoremstyle{definition}
\newtheorem{definition}[theorem]{Definition}
\newtheorem{construction}[theorem]{Construction}
\newtheorem{notation}[theorem]{Notation}
\newtheorem{object}[theorem]{Object}
\newtheorem{operation}[theorem]{Operation}

\theoremstyle{remark}
\newtheorem{remark}[theorem]{Remark}
\newtheorem{example}[theorem]{Example}

\numberwithin{equation}{subsection}

\newcommand\id{\textnormal{id}}

\newcommand\Z{\mathbb Z}
\newcommand\R{\mathbb R}
\newcommand\C{\mathbb C}
\newcommand\CP{\mathbb{CP}}
\newcommand\OO{\mathcal O}
\newcommand\HH{\mathbb H}
\newcommand\CC{\mathbf C}
\newcommand\BB{\mathbf B}
\newcommand\TT{\mathbf T}
\newcommand\PP{\mathcal P}
\newcommand\W{\mathcal W}
\newcommand\RR{\mathcal R}
\newcommand\FF{\mathcal F}
\newcommand\Aut{\textnormal{Aut}}
\newcommand\dist{\textnormal{dist}}
\newcommand\diam{\textnormal{diam}}
\newcommand\eval{\textnormal{eval}}
\title{Quasimorphisms and pseudo-Anosov flows}

\author{Danny Calegari}
\address{University of Chicago \\ Chicago, IL 60637 USA}
\email{dannyc@uchicago.edu}
\author{Jonathan Zung}
\address{Massachusetts Institute of Technology \\ Cambridge, MA 02139 USA}
\email{jzung@mit.edu}
\date{\today}

\begin{abstract}
We describe two connections between the theory of quasimorphisms and
pseudo-Anosov flows without perfect fits on closed hyperbolic 3-manifolds. First
we show that for every such flow $X$, there are quasimorphisms whose coarse
restriction to each flowline of $\tilde{X}$ (the lifted flow in the universal
cover) are uniform quasi-isometries to $\R$ --- such quasimorphisms are said to
be {\em adapted} to $X$; and that the space of quasimorphisms $Q_X$ adapted to
$X$ is an open convex cone in the space of all quasimorphisms on $\pi_1(M)$.
Second, we obtain upper bounds on the exponential growth rate of closed orbits in such
flows, both in the hyperbolic metric and in a word metric; quasimorphisms play a
key role in obtaining the estimates in the second case.
\end{abstract}

\maketitle
\setcounter{tocdepth}{1}
\tableofcontents

\section{Introduction}
Let $X$ be a flow on a closed hyperbolic 3-manifold $M$, and $\tilde{X}$ the lift of $X$ to $\tilde{M}$.
Do all orbits of $X$ progress in the same general direction in $\tilde{M}$? One way to make this precise is
to ask:

\begin{qquestion}
Is there a Lipschitz function $\phi:\HH^3 \to \R$ whose restriction to each flowline
$\ell$ of $\tilde{X}$ is a quasi-isometry $\phi:\ell \to \R$?
\end{qquestion}
A function $\phi$ with this property is said to be {\em adapted} to $X$.
For such a function $\phi$ to exist it is necessary that every flowline of $\tilde{X}$
must be a quasigeodesic; i.e.\/ $X$ is what is known as a {\em quasigeodesic flow}.

Flows are oriented, and therefore so is every flowline of $\tilde{X}$. 
Since these flowlines are quasigeodesics, each is asymptotic 
to a pair of distinct {\em endpoints} in $S^2_\infty$, and since they are oriented,
we may distinguish the {\em positive} and {\em negative} endpoints of each flowline.

If such a function $\phi$ exists, by continuity, and up to replacing $\phi$ by $-\phi$,
each quasi-isometry $\phi:\ell \to \R$ may be assumed to be coarsely 
orientation preserving. In particular, it cannot be
the case that there are flowlines $\ell,\ell'$ of $\tilde{X}$ such that the
positive endpoint of $\ell$ is equal to the negative endpoint of $\ell'$.
A pair of flowlines with this property is said to be {\em anti-aligned}.
Since $M$ is compact, $\tilde{X}$ contains a pair of 
anti-aligned flowlines if and only if it contains a pair of flowlines $\ell,\ell'$
for which the positive endpoint of $\ell$ is equal to the negative endpoint of
$\ell'$ {\em and vice versa}.

One natural class of geodesic flows $X$ without anti-aligned flowlines is the
class of {\em pseudo-Anosov flows without perfect fits}; see e.g.\/
Fenley \cite{Fenley_quasigeodesic} for an introduction to the theory of pseudo-Anosov flows,
and the meaning of perfect fits. 

If $G=\pi_1(M)$, any function $\phi:G \to \R$ Lipschitz in the word metric may be
extended from a $G$-orbit in $\HH^3$ to a Lipschitz function $\phi:\HH^3 \to \R$. 
If $G$ is any group, a {\em quasimorphism} $\phi:G \to \R$ is a function for which 
there is some least $D(\phi)$ so that $|\phi(gh)-\phi(g)-\phi(h)|\le D(\phi)$ for
all $g,h\in G$. Any quasimorphism on $\pi_1(M)$ defines in this way a Lipschitz
function on $\HH^3$. 

Our first main theorem (Theorem~\ref{theorem:uniform_quasimorphism}) says the following.
First of all, if $X$ is a pseudo-Anosov flow
without perfect fits, then there are (unbounded) quasimorphisms on $G$ that are
adapted to $X$. Secondly, such quasimorphisms enjoy an additional geometric
property: they are {\em uniform}, which means that their coarse level sets are
coarsely connected; see \cite{Calegari_Loukidou_zippers}, \S~4 for a discussion of
uniform quasimorphisms. Thirdly, the space $Q_X(G)$ of quasimorphisms on $G$ adapted
to $X$ is an open convex cone in the space $Q(G)$ of all quasimorphisms.

Theorem~\ref{theorem:uniform_quasimorphism} is used by Calegari--Loukidou 
\cite{Calegari_Loukidou_wheels}, \S~6, to show that a pseudo-Anosov flow $X$ can 
be reconstructed from any adapted uniform quasimorphism. Consequently there is a 
canonical correspondence between connected components of the space 
of uniform quasimorphisms on $\pi_1(M)$ and pseudo-Anosov flows without perfect fits
on $M$ up to orbit equivalence.

\medskip

The second part of the paper concerns the exponential growth rate of the 
number of closed orbits of a pseudo-Anosov flow without perfect fits. We 
show that this exponential growth rate is strictly smaller than the exponential 
growth rate of $\pi_1(M)$. 
This is true both with respect to any word metric (Theorem~\ref{theorem:orbit_bound_word})
or the hyperbolic metric (Theorem~\ref{theorem:orbit_bound_hyp}).

The proof of Theorem~\ref{theorem:orbit_bound_word} makes use of the adapted quasimorphisms 
constructed in Theorem~\ref{theorem:uniform_quasimorphism}. The value of any quasimorphism $\phi$ 
on a random element of $\pi_1(M)$ concentrates near $0$, whereas the value of any $\phi$ adapted to $X$
will grow linearly on orbits of $X$. Thus, it is exponentially unlikely for an element of 
$\pi_1(M)$ to be realized by a closed orbit of $X$. To make this precise, we use 
Calegari--Fujiwara's theory of bicombable quasimorphisms \cite{Calegari_Fujiwara}, 
geodesic combings of $\pi_1(M)$, and symbolic codings for $X$ to express $\phi$ in terms of 
the statistics of a random walk on a finite directed graph.

Similar results for arbitrary pseudo-Anosov flows are obtained using rather
different methods by Barthelm\'e--Mann--Paulet--Zalloum \cite{BMPZ}. 

We give two proofs of Theorem~\ref{theorem:orbit_bound_hyp}. The first appeals
to a rather general large deviations estimate that follows from the work of
Cantrell--Reyes--Sert \cite{Cantrell_Reyes_Sert}. The second is more geometric. 
The theory of CaTherine wheels \cite{Calegari_Loukidou_wheels} associates to a 
pseudo-Anosov flow without perfect fits a map 
$f:S^1\to S^2_\infty$ with the property that there is some uniform $K$ so that
the image of each interval $I\subset S^1$ is a $K$-quasidisk. In particular, $2K/(K+1)$
is an upper bound for the Hausdorff dimension of the images $Z^\pm$ in $S^2_\infty$ 
of $\tilde{X}$ under the positive and negative endpoint maps respectively. This Hausdorff
dimension bounds the growth rate of orbits of $X$ by an elementary application of
Ledrappier--Young \cite{Ledrappier_Young_1, Ledrappier_Young_2}. Turning this
around, an estimate on the growth rate of periodic orbits of $X$ gives a {\em lower bound}
on the Hausdorff dimension of the endpoint sets.
Computations (see \S~\ref{section:computations}) suggest that this lower
bound is very close to sharp, at least in some simple examples. 

\medskip

Although we are motivated by applications to the geometry and dynamics of pseudo-Anosov
flows on hyperbolic 3-manifolds, our methods are quite general, and apply without
modification to (word)-hyperbolic groups in the sense of Gromov. Let us briefly explain
this. Let $G$ be a non-elementary hyperbolic group. 
Let $\tilde{\FF}(G)$ denote Mineyev's {\em flow space},
and let $\FF(G)$ denote the quotient by $G$; see
\cite{Mineyev}. The flow space $\tilde{\FF}(G)$ is a metric space homeomorphic to 
$(\partial_\infty G \times \partial_\infty G - \Delta) \times \R$ and quasi-isometric
to $G$, and comes with commuting isometric actions of $G$ on the $\partial_\infty G$
factors, and a {\em flow} on the $\R$ factors. If $G=\pi_1(M)$ for $M$ closed and
negatively curved, $\FF(G)$ is equivariantly homeomorphic to the unit tangent bundle of
$M$ together with its geodesic flow.

There is an involution $\iota:\FF(G) \to \FF(G)$ which anticommutes with $\R$ and
reverses the direction of the flow. Say that $\Lambda \subset \FF(G)$ is 
{\em coherent} if it is closed and flow-invariant, and has the property that
$\Lambda$ is disjoint from $\iota(\Lambda)$. Flowlines of $\tilde{\Lambda}$ in
$\tilde{\FF}(G)$ coarsely track (quasi)geodesics in $G$, and it makes sense to ask
for a quasimorphism $\phi$ on $G$ adapted to $\Lambda$. Our methods show that any
coherent $\Lambda$ admits an open convex cone of adapted quasimorphisms, and the
exponential growth rate of the number of closed orbits of $\Lambda$ is strictly less
than the growth rate of $G$ in the word metric.

\section{Uniform quasimorphisms}\label{section:uniform_quasimorphisms}

\subsection{Quasimorphisms}

A general reference for the theory of quasimorphisms is \cite{Calegari_scl}.

\begin{definition}[Quasimorphism]\label{definition:quasimorphism}
Let $G$ be a group. A {\em quasimorphism} is a function $\phi:G \to \R$ for
which there is some least non-negative real number $D(\phi)$ (called the {\em defect})
so that for all $g,h\in G$ there is an inequality
$$|\phi(gh) - \phi(g) - \phi(h)| \le D(\phi)$$
\end{definition}

Any bounded function is a quasimorphism. Any homomorphism $\phi:G\to \R$ is a 
quasimorphism, and a quasimorphism is a homomorphism if and only if it has defect $0$. 
Nontrivial examples arise from negative curvature (de Rham quasimorphisms;
counting quasimorphisms; see \S~\ref{subsection:counting_quasimorphisms})
and from symplectic/causal geometry (rotation quasimorphisms; Maslov quasimorphisms).

Two quasimorphisms $\phi,\phi'$ are {\em equivalent} if $|\phi - \phi'|$ is a
bounded function. Each equivalence class of quasimorphism contains a unique representative
$\psi$ which is {\em homogeneous}: i.e.\/ that satisfies $\psi(g^n) = n\psi(g)$ for all
$g\in G$ and all $n\in \Z$. Given $\phi$, its homogenization $\psi$ may be defined by
$$\psi(g):=\lim_{n \to \infty} \phi(g^n)/n$$
and satisfies $D(\psi) \le 2D(\phi)$; see \cite{Calegari_scl}.

If $G$ is a group, the set of all equivalence classes of quasimorphisms
on $G$ is denoted $Q(G)$. This contains $H^1(G;\R)$ as a subspace, and the quotient
$Q(G)/H^1(G;\R)$ is a (typically non-separable) Banach space whose norm is the 
defect of the homogeneous representative. Thus when $G$ is finitely generated, 
$Q(G)$ carries a canonical topology as a topological vector space.

\subsection{Counting quasimorphisms}\label{subsection:counting_quasimorphisms}

The theory of counting quasimorphisms on hyperbolic groups was developed by
Fujiwara \cite{Fujiwara}, building on a construction due to Rhemtulla
\cite{Rhemtulla} (rediscovered by Brooks \cite{Brooks}) for free groups,
and we refer to \cite{Fujiwara} for details.

Let $G$ be a finitely generated group with a finite symmetric generating set
$S$, and let $\Gamma$ denote the Cayley graph of $G$ with respect to $S$. We identify
$G$ with the vertices of $\Gamma$, and endow $\Gamma$ with the structure of a
geodesic metric space by setting the length of every edge equal to $1$; denote
this metric by $d_\Gamma(\cdot,\cdot)$. The identity
element $e\in G$ picks out a distinguished basepoint in $\Gamma$. The action
of $G$ on itself from the left induces an isometric simplicial action of $G$ on $\Gamma$.

Let us now assume that $G$ is a hyperbolic group, so that $\Gamma$ is a
$\delta$-hyperbolic space in the sense of Gromov \cite{Gromov} for some finite $\delta$. This means
that for every geodesic triangle in $\Gamma$, every point on an edge of $\Gamma$ is
within distance $\delta$ of some point on one of the other two sides.

If $\alpha$ is a finite (simplicial) path in $\Gamma$, 
denote the (simplicial) length of $\alpha$ by $|\alpha|$. 
If $\alpha,\beta$ are two oriented embedded simplicial paths in $\Gamma$, and
$\alpha$ includes as a subpath of $\beta$ {\em in an orientation preserving way}, we 
write $\alpha \prec \beta$.

If $\sigma$ is an oriented path in $\Gamma$, and $g\in G$,
we let $g\sigma$ denote the oriented path in $\Gamma$ obtained from $\sigma$ 
by left multiplication by $g$, and we let $\sigma^{-1}$ denote the same path
$\sigma$ with the opposite orientation.

\begin{definition}[Small counting quasimorphism]\label{definition:small_counting_quasimorphism}
If $\Delta$ is any set of finite
oriented embedded simplicial paths in $\Gamma$, and $G\Delta$ denotes the set of
paths in $\Gamma$ of the form $g\sigma$ with $g\in G$
and $\sigma \in \Delta$, then for any finite oriented embedded simplicial path
$\gamma$ in $\Gamma$, we define 
$$c_\Delta(\gamma): = \max \text{ number of interior disjoint }\tau \prec \gamma \text{ so that each }
\tau \text{ is in } G\Delta$$

For any $g \in G$ define the {\em small counting function} $c_\Delta(g)$ to be
$$c_\Delta(g): = \max_\gamma c_\Delta(\gamma) + d_\Gamma(e,g) - |\gamma|$$
where the maximum is taken over all oriented embedded paths $\gamma$ from $e$ to $g$.

Finally, define the {\em small counting quasimorphism} $\phi_\Delta(g)$ to be the difference
$$\phi_\Delta(g) = c_\Delta(g) - c_\Delta(g^{-1})$$
\end{definition}
Note that if $\Delta$ is a set of finite oriented embedded simplicial paths in $\Gamma$, we
denote by $\Delta^{-1}$ the same set of paths with the opposite orientation. Evidently,
by the symmetry of the generating set $S$, we have $c_\Delta(g^{-1}) = c_{\Delta^{-1}}(g)$.

The idea of the definition of $c_\Delta(g)$ is that associated to an element $g\in G$
we try to find an oriented path $\gamma$ from $e$ to $g$ that contains as many translates
of elements of $\Delta$ as possible, while at the same time is as short as possible.
A path $\gamma$ from $e$ to $g$ maximizing $c_\Delta(\gamma) + d_\Gamma(e,g) - |\gamma|$
is called a {\em realizing path} for $g$.

\begin{lemma}[realizing paths quasigeodesic \cite{Fujiwara}, Lemma~3.3]\label{lemma:realizing_path_quasigeodesic}
Suppose that every element $\sigma$ of $\Delta$ has $|\sigma| \ge 2$. Then for any
$g\in G$ any realizing path $\gamma$ is a $(2,4)$-quasigeodesic.

Consequently there is a constant $C(\delta)$ such that for any $g\in G$, if the
$C(\delta)$-neighborhood of any oriented geodesic from $e$ to $g$ does not contain a
{\em coherently oriented} translate $g\sigma$ of some $\sigma \in \Delta$, then
$c_\Delta(g)=0$.
\end{lemma}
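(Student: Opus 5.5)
The plan is a local-replacement argument. Let $\gamma$ be a realizing path for $g$, and suppose some subpath $\alpha \prec \gamma$ with endpoints $p,q$ has length $|\alpha| > 2 d_\Gamma(p,q) + 4$. We will build a competitor path $\gamma'$ from $e$ to $g$ with $c_\Delta(\gamma') + d_\Gamma(e,g) - |\gamma'|$ strictly larger, contradicting maximality. First note the normalization: a geodesic from $e$ to $g$ gives value at least $0$, so $c_\Delta(g)\ge 0$ and $|\gamma| - d_\Gamma(e,g) \le c_\Delta(\gamma)$.

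\textbf{Replacement step.} Fix a maximal family $\mathcal{T}$ of interior-disjoint translates $\tau\prec\gamma$ with each $\tau\in G\Delta$. Replace $\alpha$ by a geodesic $\beta$ from $p$ to $q$; if the result is not embedded, shortcut loops, which only shortens it and preserves the translates lying outside $\alpha$.
\begin{itemize}
\item The translates lost are those meeting the interior of $\alpha$. Those inside $\alpha$ have length $\ge 2$ and are interior-disjoint, so there are at most $|\alpha|/2$ of them.
\item At most two more translates straddle $p$ or $q$.
\item Hence the count drops by at most $|\alpha|/2 + 2$, while the length drops by $|\alpha| - d(p,q)$.
\end{itemize}
Maximality of $\gamma$ therefore forces $|\alpha| - d(p,q) \le |\alpha|/2 + 2$, i.e.\ $|\alpha| \le 2d(p,q)+4$. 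Since this holds for every subpath, $\gamma$ is a $(2,4)$-quasigeodesic.

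\textbf{Main obstacle.} The delicate point is the bookkeeping in the bullet list above. We must make sure the lower bound $|\sigma|\ge 2$ is what yields the count $|\alpha|/2$. We must also handle the non-embeddedness after splicing, or else interpret the maximum as being over all paths, which does not change the value.

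\textbf{Consequence.} Fix a geodesic $[e,g]$. By Morse stability in the $\delta$-hyperbolic space $\Gamma$, every $(2,4)$-quasigeodesic with the same endpoints lies in the $C(\delta)$-neighborhood of $[e,g]$, and conversely.
\begin{itemize}
\item If $c_\Delta(g)>0$, a realizing path $\gamma$ has $c_\Delta(\gamma) \ge 1 + |\gamma| - d(e,g) \ge 1$.
\item So $\gamma$ contains a coherently oriented translate $g'\sigma$.
\item By the first part, $\gamma$ lies in the $C(\delta)$-neighborhood of the geodesic, so $g'\sigma$ does too, with orientation agreeing with that of $\gamma$ and hence coherent with $[e,g]$.
\end{itemize}
Contrapositively, if no coherently oriented translate lies in that neighborhood, then $c_\Delta(g)=0$.
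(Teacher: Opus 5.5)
Your approach is the standard one. The paper does not reprove this lemma; it cites Fujiwara, whose argument is this replacement of a subpath $\alpha$ by a geodesic, losing at most $|\alpha|/2+2$ translates, followed by the Morse lemma. Your bookkeeping and your derivation of the second part are fine. That includes using integrality to get $c_\Delta(\gamma)\ge 1$, and noting that a realizing path exists, since paths longer than $2d_\Gamma(e,g)$ score below a geodesic.

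The step that fails as written is your patch for non-embeddedness. Write $\gamma=\gamma_1\alpha\gamma_2$ and let $\beta$ be a geodesic from $p$ to $q$. If $\beta$ meets $\gamma_1$ at a vertex $x\ne p$, removing the resulting loop deletes the segment $\gamma[x,p]$. That segment lies outside $\alpha$ and may carry translates. So your claim that shortcutting "preserves the translates lying outside $\alpha$" is false, and the count can drop by more than $|\alpha|/2+2$. Your alternative (take the maximum over all paths, "which does not change the value") is asserted without proof. It is true, but it needs its own loop-removal estimate. The crude bound $L/2+2$ on translates lost when removing a loop of length $L$ is not enough for $L=2,3$. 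You need the sharper $L/2+1$: every destroyed translate uses at least one edge of the loop, those inside it use at least two, and at most two straddle its ends.

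The clean fix is to choose where to splice. Let $x$ be the first vertex of $\gamma[e,p]$ lying on $\beta$, and $y$ the last vertex of $\gamma[q,g]$ lying on $\beta$. Let $\beta'$ be the subsegment of $\beta$ between them, oriented from $x$ to $y$. Then $\gamma[e,x]\cdot\beta'\cdot\gamma[y,g]$ is embedded, because $\gamma[e,x]$ and $\gamma[y,g]$ are disjoint and each meets $\beta$ in one vertex. This new path is obtained from $\gamma$ by replacing the subpath $\alpha'=\gamma[x,y]\supseteq\alpha$ with a geodesic of length $d_\Gamma(x,y)\le d_\Gamma(p,q)$. Your bookkeeping applied to $\alpha'$ gives $|\alpha'|\le 2d_\Gamma(x,y)+4$, hence $|\alpha|\le|\alpha'|\le 2d_\Gamma(p,q)+4$. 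With this change the argument is complete.
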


We emphasize that the different translates of $\Delta$ elements in $\gamma$ are allowed to be
translates of different elements of $\Delta$, but they are required to be disjoint
(the disjointness accounts for the adjective `small'). Allowing overlapping translates
defines the {\em big counting quasimorphism}. Both are, indeed, quasimorphisms, justifying
the name. One advantage of working with small counting quasimorphisms is that their defects
can be bounded independently of $\Delta$: 
\begin{proposition}[Defect bound \cite{Fujiwara}, Proposition~3.10]\label{proposition:defect_bound}
Suppose that every element $\sigma$ of $\Delta$ has $|\sigma|\ge 2$. Then
the function $\phi$ is a quasimorphism on $G$ with defect $D(\phi) \le C(\delta)$.
\end{proposition}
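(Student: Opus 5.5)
The plan is to follow Fujiwara's argument for Brooks-type quasimorphisms and show that both $c_\Delta$ and $c_{\Delta^{-1}}$ are additive up to a constant depending only on $\delta$. The key input is Lemma~\ref{lemma:realizing_path_quasigeodesic}. Because every $\sigma\in\Delta$ has $|\sigma|\ge 2$, every realizing path is a $(2,4)$-quasigeodesic. The Morse lemma in the $\delta$-hyperbolic graph $\Gamma$ then puts such a path within a uniform distance $R(\delta)$ of any geodesic with the same endpoints. Since $c_\Delta(g)\ge 0$ (take $\gamma$ geodesic) and $c_\Delta(g)\le d_\Gamma(e,g)$, the quantities involved are finite and maximizers exist.

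Fix $g,h$ and the geodesic triangle with vertices $e,g,gh$; it is $\delta$-thin. Let $p$ be its center, at distance at most $\delta$ from all three sides. I will prove two inequalities for $c_\Delta$.

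\textbf{Subadditivity up to a constant.} Take a realizing path $\gamma$ for $gh$. It fellow-travels $[e,gh]$, so it passes within $R+\delta$ of $p$. Cut $\gamma$ at the point $q$ nearest $p$, and join both pieces to $g$ by short geodesic segments of length at most $R+2\delta$. The piece from $e$ to $q$, followed by the segment $q\to g$, is a path from $e$ to $g$. Likewise, $g^{-1}$ applied to the segment $g\to q$ followed by the remaining piece is a path from $e$ to $h$. Each concatenation may fail to be embedded. I will either shortcut the loops, which removes only boundedly many translates near the junction, or note that Fujiwara's count extends to paths that are not embedded. Only the $O(R+\delta)$ copies of translates meeting the cut region are lost, since the translates are disjoint and each has length $\ge 2$ (so at most about $(R+\delta)/2$ of them meet a ball of radius $R+\delta$). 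The length penalty $|\gamma|-d_\Gamma$ changes by a term bounded by the Gromov product defects, and these are $O(\delta)$ because $p$ is a center. This gives $c_\Delta(gh)\le c_\Delta(g)+c_\Delta(h)+C_1(\delta)$.

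\textbf{Superadditivity.} Concatenate realizing paths for $g$ and for $h$ (translated by $g$). Near $g$ this concatenation backtracks along a segment of length roughly $(e\,|\,gh)_g$. Cut out the backtracking and reconnect near $p$. Again, only boundedly many translates near the junction are destroyed. The length penalty is unchanged up to $O(\delta)$, since $d(e,g)+d(g,gh)-d(e,gh)=2(e\,|\,gh)_g$ exactly cancels the removed backtracking. This gives $c_\Delta(gh)\ge c_\Delta(g)+c_\Delta(h)-C_2(\delta)$.

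Combining the two bounds, $c_\Delta$ is a quasimorphism with defect bounded in terms of $\delta$ alone. The same holds for $c_{\Delta^{-1}}$, using the identity $c_\Delta(g^{-1})=c_{\Delta^{-1}}(g)$ noted above, which makes $g\mapsto c_\Delta(g^{-1})$ a quasimorphism with the same defect bound. Hence $\phi_\Delta$ has defect $\le C(\delta)$.

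The main obstacle is the careful bookkeeping at the cut points. First, the number of destroyed translates must be bounded independently of $\Delta$; this is exactly where disjointness (the "small" in the definition) and the condition $|\sigma|\ge 2$ are used, since overlapping translates in a bounded region could be unboundedly many. Second, embeddedness has to be restored, or avoided, after splicing. I would handle both with Fujiwara's lemma that near-realizing paths stay close to geodesics, so all splicing happens inside a ball of radius $O(\delta)$ around $p$.
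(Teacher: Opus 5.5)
Your plan rests on showing that $c_\Delta$ (and likewise $c_{\Delta^{-1}}$) is \emph{by itself} a quasimorphism with defect bounded in terms of $\delta$. That claim is false, so more careful bookkeeping at the cut points cannot rescue it. Take $G=F(a,b)$ with $S=\{a^{\pm1},b^{\pm1}\}$, so that $\Gamma$ is a tree, and let $\Delta=\{\sigma\}$ with $\sigma$ the path $e\to a\to ab$. Let $g=(ab)^n$ and $h=g^{-1}$. The only embedded path from $e$ to $g$ contains $n$ interior-disjoint coherently oriented translates of $\sigma$. The path from $e$ to $h$ traverses only edges labelled $b^{-1}$ and $a^{-1}$. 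Hence $c_\Delta(g)=n$ and $c_\Delta(h)=c_\Delta(gh)=0$, and your superadditivity inequality $c_\Delta(gh)\ge c_\Delta(g)+c_\Delta(h)-C_2(\delta)$ fails by $n$. The faulty step is the claim that cutting out the backtracking destroys only boundedly many translates. What is excised is the entire tripod leg from the center $p$ to $g$, traversed out and back, of length about $2(e\,|\,gh)_g$, and every translate along it is lost. This excision does not take place inside a ball of radius $O(\delta)$ around $p$. Your subadditivity step also has a geometric error: it places $g$ within $R+2\delta$ of $p$, whereas in fact $d(p,g)\approx(e\,|\,gh)_g$. That inequality is nevertheless true, because the cost $2d(q,g)$ of the long connecting segment is offset by $d(e,g)+d(g,gh)-d(e,gh)=2(e\,|\,gh)_g$.

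The missing idea is that the unbounded leg contributions cancel only in the difference $c_\Delta(g)-c_\Delta(g^{-1})$. Put $c(a,b):=c_\Delta(a^{-1}b)$, which is $G$-invariant, and $\phi(a,b):=c(a,b)-c(b,a)$, which is antisymmetric. Then $\phi_\Delta(g)+\phi_\Delta(h)-\phi_\Delta(gh)=\phi(e,g)+\phi(g,gh)+\phi(gh,e)$. Your two splicing arguments do work when the splitting point $x$ lies \emph{on} a geodesic from $a$ to $b$. In that case there is no backtracking, and realizing paths into and out of $x$ can only meet near $x$, which gives $|c(a,b)-c(a,x)-c(x,b)|\le K(\delta)$. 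Combine this with the fact that $c$ is coarsely Lipschitz in each variable, with constants independent of $\Delta$. Then split each side of the triangle at its internal tripod point $x\in[e,g]$, $y\in[g,gh]$, $z\in[gh,e]$; these points are pairwise $O(\delta)$-close. The six resulting terms pair off at the vertices, for example $\phi(x,g)+\phi(g,y)\approx\phi(x,g)+\phi(g,x)=0$. In words, the translates along the leg toward $g$ counted in $c_\Delta(g)$ reappear with reversed orientation in $c_{\Delta^{-1}}(h)$, and the subtraction in $\phi_\Delta$ cancels them. This is the shape of the Brooks--Fujiwara argument that the paper cites rather than reproduces.
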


The proofs of Lemma~\ref{lemma:realizing_path_quasigeodesic} and
Proposition~\ref{proposition:defect_bound} in \cite{Fujiwara} treat only the case 
that $\Delta$ contains a single element, but nowhere use this hypothesis. 
A crude estimate gives a bound of the order of magnitude $C(\delta) \le 20\delta$ 
but there is probably no reason to try to make it sharp.

\subsection{Counting quasimorphisms from flows}

Let $M$ be a closed hyperbolic 3-manifold with fundamental group $G$. If we fix a 
finite symmetric generating set for $G$ then we may form the Cayley graph $\Gamma$ as
in \S~\ref{subsection:counting_quasimorphisms} and observe that this is $\delta$-hyperbolic 
for some $\delta$.

Pick a basepoint $p \in M$ and a lift $\tilde{p} \in \tilde{M} = \HH^3$ and use
the $G$ action to build an equivariant quasi-isometry $\Gamma \to \HH^3$. For concreteness,
we can start with a 1-vertex geodesic triangulation of $M$, and take $\Gamma$ to
be the $1$-skeleton of the lift of this triangulation to $\HH^3$.
Fix a compact fundamental domain $A \subset \HH^3$.

Let $X$ be a pseudo-Anosov flow without perfect fits on $M$, and let $\tilde{X}$ be
the lifted flow in the universal cover. It is convenient to assume that $X$ is smooth;
if not, we can modify it slightly to obtain a nearby flow which is smooth and quasigeodesic,
and has no perfect fits; these are the only properties of $X$ we use in the sequel.

\begin{lemma}[Uniform quasigeodesics \cite{Fenley_quasigeodesic}]\label{lemma:flowlines_quasigeodesic}
The flowlines of $\tilde{X}$ are uniform $K$-quasigeodesics and therefore there is a constant $C(K)$
so that every flowline $\ell$ of $\tilde{X}$ is contained in the $C(K)$-neighborhood of
some (any) geodesic $\ell_\Gamma$ in $\Gamma$.
\end{lemma}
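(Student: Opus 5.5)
The first assertion, that flowlines of $\tilde{X}$ are uniform $K$-quasigeodesics, is Fenley's theorem for pseudo-Anosov flows without perfect fits. I would cite it rather than reprove it, and only indicate the structure of his argument. Fenley shows that the absence of perfect fits forces the flow to be quasigeodesic. He first proves that each flowline is individually quasigeodesic; the key point is that a flowline cannot accumulate back on itself in $S^2_\infty$, since the stable and unstable leaves through a flowline, together with the no-perfect-fits condition, separate its two ends at infinity. Compactness of $M$ then upgrades this to uniformity. Concretely, the space of flowlines (the orbit space) modulo $G$ is compact in the appropriate sense. If the quasigeodesic constants failed to be uniformly bounded, one could translate bad flow segments into the fundamental domain $A$ and take a limit, obtaining a flowline of $\tilde{X}$ that is not quasigeodesic, which is a contradiction. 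For the purposes of the sequel, the paper explicitly allows replacing $X$ by a smooth nearby flow. Such a perturbation changes flowlines by a bounded amount in the universal cover, so the perturbed flow is still uniformly quasigeodesic; the constant $K$ changes, but it stays uniform.

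The second assertion is a formal consequence. The map $\Gamma \to \HH^3$ is a $G$-equivariant quasi-isometry, because $G$ acts cocompactly and properly on both spaces (Švarc--Milnor). Pick a quasi-inverse $\HH^3 \to \Gamma$, for example the map sending a point of $gA$ to the vertex $g$. The image of a flowline $\ell$ under this quasi-inverse is a $K'$-quasigeodesic in $\Gamma$, where $K'$ depends only on $K$ and the quasi-isometry constants. By construction it is at uniformly bounded distance from $\ell$ itself.

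The main step is then the Morse lemma (stability of quasigeodesics) in the $\delta$-hyperbolic graph $\Gamma$. A bi-infinite $K'$-quasigeodesic lies within Hausdorff distance $R(K',\delta)$ of a bi-infinite geodesic $\ell_\Gamma$ with the same endpoints in $\partial_\infty \Gamma \cong S^2_\infty$. Two things need checking here:
\begin{itemize}
\item \emph{Existence of $\ell_\Gamma$.} The bi-infinite geodesic exists because $\Gamma$ is locally finite and distinct boundary points are joined by geodesics. One can take an Arzel\`a--Ascoli limit of geodesics joining points far out along the quasigeodesic, using the finite-segment Morse lemma for uniform control.
\item \emph{``Some (any)'' geodesic.} Any two bi-infinite geodesics with the same endpoints are within distance $2\delta$ (or a comparable constant) of each other. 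This justifies the parenthetical in the statement.
\end{itemize}

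Combining the bounded distance between $\ell$ and its image in $\Gamma$ with the Morse constant yields $C(K)$. This constant also depends on $\delta$ and the fixed quasi-isometry data, but these are fixed throughout.

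I expect the genuine obstacle to be entirely in the first part, namely Fenley's uniform quasigeodesic property. In particular, showing that no perfect fits forbids flowlines from having coincident positive and negative endpoints requires the detailed structure of the stable and unstable foliations in the orbit space. I would simply invoke \cite{Fenley_quasigeodesic} for this.
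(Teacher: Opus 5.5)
Your proposal is correct and matches the paper, which gives no argument beyond citing Fenley for uniform quasigeodesicity; the neighborhood statement is the standard Morse-lemma consequence that you spell out via the equivariant quasi-isometry $\Gamma \to \HH^3$. One caution about the part you only sketch: the naive compactness argument for uniformity does not work as written, because a limit of translated flowlines with blowing-up constants can be a perfectly good quasigeodesic (the bad behaviour can escape to infinite time), so it is right to defer this to the literature rather than rely on the sketch.
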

We call $\ell_\Gamma$ as in the statement of Lemma~\ref{lemma:flowlines_quasigeodesic} a
{\em straightening} of $\ell$.

\begin{lemma}[No long anti-aligned segments]\label{lemma:no_anti_align}
For every $C$ there is a constant $T(C)$ so that if $\ell$ is a positively oriented
flowline of $\tilde{X}$ with oriented straightening $\ell_\Gamma$, and if
$\ell'$ is a negatively oriented flowline of $\tilde{X}$ with
oriented straightening $\ell'_\Gamma$, then if there is an oriented segment
$\sigma' \prec \ell'_\Gamma$ and a coherently oriented translate $g\sigma'$
contained in the $C$-neighborhood of $\ell_\Gamma$
then $|\sigma'| \le T(C)$.
\end{lemma}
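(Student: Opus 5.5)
We argue by contradiction, using compactness and the absence of anti-aligned flowlines. First we interpret the statement. We have $\ell'$ negatively oriented, so $\ell'_\Gamma$ is oriented opposite to the flow on $\ell'$, while $\ell_\Gamma$ is oriented along the flow on $\ell$. Thus $g\sigma'$ lying coherently in the $C$-neighborhood of $\ell_\Gamma$ means the following: the flowline $g\ell'$, which is again a flowline of $\tilde{X}$ since $X$ lifts equivariantly, fellow travels $\ell$ along a long segment, but with the opposite orientation. Suppose the conclusion fails for some $C$. Then there are pairs $(\ell_n,\ell'_n)$, elements $g_n$, and segments $\sigma'_n\prec\ell'_{n,\Gamma}$ with $|\sigma'_n|\to\infty$ and $g_n\sigma'_n\subset N_C(\ell_{n,\Gamma})$, coherently oriented. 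Replacing $\ell'_n$ by $g_n\ell'_n$, we may assume $g_n=e$.

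Next we translate everything to a bounded region. Let $m_n$ be the midpoint of $\sigma'_n$, and choose $h_n\in G$ with $h_n m_n$ in the compact fundamental domain $A$ (or within bounded distance of $\tilde p$). By Lemma~\ref{lemma:flowlines_quasigeodesic}, $\ell_n$ and $\ell'_n$ lie within $C(K)$ of their straightenings. After translating by $h_n$, both flowlines therefore pass within distance $C+2C(K)$ of a fixed compact set. The space of flowlines of $\tilde X$ meeting a compact set is compact: parametrize them by points of a slightly larger compact set and use continuity of the flow. So we may pass to subsequences with $h_n\ell_n\to\ell$ and $h_n\ell'_n\to\ell'$ uniformly on compacta.

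Now we pass to the limit. For each $R$, when $n$ is large the segment $h_n\sigma'_n$ has both halves of length greater than $R$. That segment lies within $C$ of $h_n\ell_{n,\Gamma}$, with the orientation of $\ell_{n,\Gamma}$ (forward along the flow) agreeing with that of $\sigma'_n$ (backward along $\ell'_n$). By the uniform quasigeodesic property, the forward ray of $h_n\ell_n$ from near $A$ fellow travels, within a uniform constant $C'$, the backward ray of $h_n\ell'_n$ for a length that grows with $R$. The same holds for the backward ray of $h_n\ell_n$ and the forward ray of $h_n\ell'_n$. Since flowlines are uniform quasigeodesics, these fellow-traveling estimates pass to the limit. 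In the limit, the forward ray of $\ell$ stays at bounded distance from the backward ray of $\ell'$, and vice versa. Hence the positive endpoint of $\ell$ equals the negative endpoint of $\ell'$, and the negative endpoint of $\ell$ equals the positive endpoint of $\ell'$. So $\ell,\ell'$ are anti-aligned, contradicting the fact that pseudo-Anosov flows without perfect fits have no anti-aligned flowlines, as cited in the introduction via Fenley.

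The main obstacle is that last fact: it must be available as a genuine input, namely that distinct flowlines of a pseudo-Anosov flow without perfect fits never have positive endpoint equal to another's negative endpoint. I would cite Fenley's results on the endpoint maps, which say that endpoint coincidences only occur along stable or unstable leaves, and these do not reverse orientation. A secondary technical point is making sure the convergence of flowlines controls the endpoints. This needs the uniform quasigeodesic constant together with the Morse lemma, so that uniform fellow traveling on arbitrarily long segments centered near $A$ forces equality of the endpoints in $S^2_\infty$.
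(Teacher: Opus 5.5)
Your proposal is correct and follows the same route as the paper: argue by contradiction, use compactness (translating a midpoint into a fundamental domain and extracting limiting flowlines) to produce two flowlines where the positive endpoint of one equals the negative endpoint of the other, then invoke Fenley's result that this cannot happen for a pseudo-Anosov flow without perfect fits. Your version simply supplies the compactness and limit details that the paper's two-line proof leaves implicit.
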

\begin{proof}
Suppose not. Then we may extract a pair of flowlines $\ell,\ell'$ so that the
positive endpoint of $\ell$ is equal to the negative endpoint of $\ell'$. But this
implies that $X$ has a perfect fit.
\end{proof}

Fix some $T \gg T(C)$ where $C/100 > \max{\diam(A),C(K),C(\delta)}$ where
$A$ is the chosen fundamental domain for $M$, where $C(K)$ is the constant from
Lemma~\ref{lemma:flowlines_quasigeodesic}, and where $C(\delta)$ is the
constant from Lemma~\ref{lemma:realizing_path_quasigeodesic} for $\delta$ the
constant of $\delta$-hyperbolicity associated to $\Gamma$.

Define $\Delta$ to be the set of all (oriented) segments $\sigma$ of length $T$
contained in all oriented straightenings $\ell_\Gamma$ of all flowlines of $\tilde{X}$.
Note that this is only finitely many $G$ orbits of finite oriented segments.
Define $\phi_\Delta$ to be the small counting quasimorphism associated to $\Delta$
in Definition~\ref{definition:small_counting_quasimorphism}.

\begin{proposition}[Linear on flowlines]\label{proposition:linear_on_flowlines}
There is a constant $C>0$ so that 
for any oriented flowline $\ell$ of $\tilde{X}$ and any oriented straightening
$\ell_\Gamma$ whose vertices are enumerated as $g_i$ for $i\in \Z$, there is an
inequality
$$T^{-1} (j-i) - C \le \phi_\Delta(g_j) - \phi_\Delta(g_i) \le T^{-1} (j-i) + C$$
\end{proposition}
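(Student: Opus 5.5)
First I reduce to group elements. Since $\phi_\Delta$ is a quasimorphism with defect at most $C(\delta)$ (Proposition~\ref{proposition:defect_bound}), I have $\phi_\Delta(g_j)-\phi_\Delta(g_i) = \phi_\Delta(g_i^{-1}g_j) + O(C(\delta))$. The element $h=g_i^{-1}g_j$ is joined to $e$ by the geodesic segment $\gamma_0 = g_i^{-1}\ell_\Gamma[g_i,g_j]$. This segment has length $j-i$ and is the straightening of a flowline of $\tilde X$, namely the translate $g_i^{-1}\ell$. So it suffices to show two things: $c_\Delta(h) = T^{-1}(j-i) + O(1)$ and $c_\Delta(h^{-1}) = c_{\Delta^{-1}}(h) = O(1)$. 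The constants may depend on $T$, which is fixed.

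\textbf{Lower bound for $c_\Delta(h)$.} I take $\gamma=\gamma_0$, which is geodesic, so $d_\Gamma(e,h)-|\gamma|=0$. Cut $\gamma_0$ into $\lfloor (j-i)/T\rfloor$ consecutive disjoint subsegments of length $T$. Each is an oriented length-$T$ segment of an oriented straightening of a flowline, so it lies in $\Delta$ (and hence in $G\Delta$) by definition. This gives $c_\Delta(h)\ge (j-i)/T - 1$.

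\textbf{Upper bound for $c_\Delta(h)$.} Let $\gamma$ be a realizing path. By Lemma~\ref{lemma:realizing_path_quasigeodesic} it is a $(2,4)$-quasigeodesic, so it stays in a $C(\delta)$-neighborhood of $\gamma_0$ (enlarging $C(\delta)$ if needed). The disjoint translates $\tau_1,\dots,\tau_m\prec\gamma$ each have length $T$ and project roughly monotonically to disjoint pieces of $\gamma_0$. The count then satisfies
$$c_\Delta(\gamma)+d-|\gamma| \le m + d - mT - (|\gamma|-mT).$$
Using $|\gamma| \ge mT$ together with the fact that each $\tau_k$ is geodesic and fellow-travels $\gamma_0$, the projections of the $\tau_k$ occupy length at least $T-O(C)$ each. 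Hence $m(T-O(C))\le d+O(C)$, which gives $m \le (j-i)/(T - O(C)) + O(1)$. This is only $T^{-1}(j-i)$ up to a multiplicative error. To remove it, I use the penalty term: any detour costs length. If $\gamma$ is a genuine quasigeodesic rather than close to geodesic, then $|\gamma|-d$ grows, and each extra translate gained (worth $1$) must be paid for with at least one unit of excess length. Here I will argue that packing $m$ disjoint length-$T$ geodesic pieces into a path whose total length is $d+E$ forces $mT\le d+E+O(1)\cdot$(number of junctions), with junction overlaps bounded by backtracking. This yields $m - E \le d/T + O(1)$ once $T$ is large relative to $\delta$. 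This step is the main obstacle: controlling the backtracking cost per junction. If a uniform constant fails here, I fall back to Fujiwara's estimate that realizing paths may be taken to be geodesic up to bounded error.

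\textbf{Bound for $c_{\Delta^{-1}}(h)$.} By Lemma~\ref{lemma:realizing_path_quasigeodesic}, it suffices that no coherently oriented translate of an element of $\Delta^{-1}$ lies in the $C$-neighborhood of $\gamma_0$, for $C > 100\,C(\delta)$. Such a translate would be an oriented length-$T$ segment $\sigma'$ of a negatively oriented straightening of a flowline, lying coherently near the positively oriented $\gamma_0\subset g_i^{-1}\ell_\Gamma$. Lemma~\ref{lemma:no_anti_align} then forces $T\le T(C)$, contradicting $T\gg T(C)$. So $c_\Delta(h^{-1})=0$. Combining the three estimates gives the inequality in the statement, with $C$ absorbing the defect, the rounding, and the junction constants.
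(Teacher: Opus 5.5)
Your overall structure matches the paper's. You reduce to $h=g_i^{-1}g_j$ by the quasimorphism property, compute $c_\Delta(h)$ using the geodesic $\gamma_0$, and show $c_{\Delta^{-1}}(h)=0$ via Lemma~\ref{lemma:realizing_path_quasigeodesic} and Lemma~\ref{lemma:no_anti_align}. Your lower bound and your $c_{\Delta^{-1}}$ step are fine.

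The gap is the upper bound on $c_\Delta(h)$. You leave it open (``the main obstacle'', with an unspecified fallback), and the route you sketch would not close it. Projecting the $\tau_k$ onto $\gamma_0$ only gives $m\le (j-i)/(T-O(C))+O(1)$. Its discrepancy from $(j-i)/T$ grows linearly in $j-i$, so it does not give the stated inequality with a uniform additive $C$. Your junction-cost repair has the same defect. An inequality $mT\le d+E+K\cdot(\text{number of junctions})$ with $K>0$ and roughly $m$ junctions gives $m(T-K)\le d+E$. That yields only $m-E\le d/(T-K)$, not $d/T+O(1)$.

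The missing observation is that there are no junction costs at all. The $\tau_k$ are interior-disjoint subpaths of the embedded path $\gamma$ itself, each of length exactly $T$, so $mT\le|\gamma|$. Combined with $|\gamma|\ge d=d_\Gamma(e,h)=j-i$, this gives, for every embedded $\gamma$ from $e$ to $h$,
\[
c_\Delta(\gamma)+d-|\gamma|\;\le\;\frac{|\gamma|}{T}+d-|\gamma|\;=\;d-\Bigl(1-\frac{1}{T}\Bigr)|\gamma|\;\le\;\frac{d}{T}.
\]
Since the left side is an integer, it is in fact at most $\lfloor d/T\rfloor$. No quasigeodesic or fellow-travelling input is needed for this direction. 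Hence $\gamma_0$ is itself a realizing path and $c_\Delta(h)=\lfloor (j-i)/T\rfloor$ exactly, which is what the paper means by ``and therefore it is a realizing path''. With this bound in place, your argument is complete and coincides with the paper's.
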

\begin{proof}
For $i<j$ let $g_i^{-1}\sigma(i,j)$ be the oriented segment of $\ell_\Gamma$ from $e$ to 
$g_i^{-1}g_j$. By definition, this segment contains $\lfloor |j-i|/T\rfloor$ interior
disjoint oriented translates of elements of $\Delta$, and is a geodesic, 
and therefore it is a realizing path for $g_i^{-1}g_j$, so that we actually have
equality $c_\Delta(g_i^{-1}g_j) = \lfloor |j-i|/T\rfloor$. On the other hand, by our
choice of $T$ and by Lemma~\ref{lemma:realizing_path_quasigeodesic}
the $C(\delta)$ neighborhood of $\sigma(i,j)$ cannot contain a coherently oriented
translate of $\sigma^{-1}$ for any $\sigma \in \Delta$, and therefore
$c_\Delta^{-1}(g_i^{-1}g_j) = 0$ so that $\phi_\Delta(g_i^{-1}g_j) = \lfloor |j-i|/T\rfloor$
and consequently by the quasimorphism property,
$$-C(\delta) \le \phi_\Delta(g_j) - \phi_\Delta(g_i) - \lfloor |j-i|/T\rfloor \le C(\delta)$$
for $C(\delta)$ as in Proposition~\ref{proposition:defect_bound}.
\end{proof}

\begin{definition}[Adapted quasimorphism]
Let $X$ be a pseudo-Anosov flow without perfect fits on $M$. A quasimorphism $\phi:G \to \R$
is {\em adapted to $X$} if its restriction to the straightening of each oriented
flowline of $\tilde{X}$ is a uniform (coarsely increasing) quasi-isometry to $\R$.
\end{definition}

Thus Proposition~\ref{proposition:linear_on_flowlines} says that $\phi_\Delta$ is
adapted to $X$.

\subsection{Mollifying $\phi$}

\begin{proposition}[Mollifying $\phi$]\label{proposition:mollify}
Let $X$ be a pseudo-Anosov flow without perfect fits on $M$.
Let $\phi:G \to \R$ be any quasimorphism which is adapted to $X$. Then there is a function
$\psi:\HH^3 \to \R$ which satisfies the following properties:
\begin{enumerate}
\item{$|\phi-\psi|$ is bounded on the vertices of $\Gamma$;}
\item{$\psi$ is Lipschitz;}
\item{the restriction of $\psi$ to each flowline of $\tilde{X}$ is monotone nondecreasing.}
\end{enumerate}

In particular, $\phi$ is uniform; i.e.\/ its coarse level sets are coarsely connected.
\end{proposition}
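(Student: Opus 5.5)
First extend $\phi$ from the vertices of $\Gamma$ (i.e.\/ from the orbit $G\tilde p$) to a Lipschitz function $\phi_0:\HH^3\to\R$. Do this equivariantly up to bounded error: pick a partition of unity subordinate to translates of a neighborhood of $A$ and average the values $\phi(g)$ over the translates $gA$ that meet a point. Since $\phi$ is a quasimorphism, $|\phi(gs)-\phi(g)|\le |\phi(s)|+D(\phi)$ for generators $s$, so $\phi$ is Lipschitz in the word metric. Hence $\phi_0$ is Lipschitz and $|\phi_0-\phi|$ is bounded on vertices. By Lemma~\ref{lemma:flowlines_quasigeodesic}, the straightening $\ell_\Gamma$ of a flowline $\ell$ lies within $C(K)$ of $\ell$. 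Adaptedness then gives uniform constants $\lambda,c>0$ such that, for $x$ on a flowline and $t\ge 0$,
$$\phi_0(x_t)-\phi_0(x)\ge \lambda t - c,$$
where $x_t$ denotes the time-$t$ image of $x$ under $\tilde{X}$. This uses that the flow parameter is uniformly comparable to distance along the quasigeodesic, and that the orientation is coarsely increasing.

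Next, mollify along the flow by forward infima:
$$\psi(x):=\inf_{t\ge 0}\phi_0(x_t).$$
Then $\psi$ is automatically nondecreasing along flowlines, since $\psi(x_s)$ is an infimum over a subset of the times used for $\psi(x)$. It is finite because $\phi_0(x_t)\ge\phi_0(x)+\lambda t-c$. The same bound shows that the infimum is attained for $t\le c/\lambda$, and that $\phi_0(x)-c\le\psi(x)\le\phi_0(x)$. This gives property (1).

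For property (2) we need Lipschitz control, and this is the main obstacle. The infimum is effectively over the compact window $t\in[0,c/\lambda]$. On it, $x\mapsto\phi_0(x_t)$ is Lipschitz with constant $\mathrm{Lip}(\phi_0)\cdot\sup_{t\le c/\lambda}\mathrm{Lip}(\text{time-}t\text{ map})$. The flow is smooth on compact $M$, so the time-$t$ maps are uniformly Lipschitz for bounded $t$. An infimum of uniformly Lipschitz functions over a common index set is Lipschitz. To make the index set common, I would restrict to $t\in[0,c/\lambda]$ from the start; the monotonicity computation is unaffected because the bound above shows the full infimum equals the windowed one. If the smoothness assumption were weaker, one could instead average over a small ball first, but the paper allows us to assume $X$ smooth.

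Finally, uniformity. Coarse level sets of $\phi$ correspond, up to bounded error, to sets of the form $\psi^{-1}([a,b])$. I would show each of these is coarsely connected in two steps. First, $\psi^{-1}((-\infty,a])$ is connected-ish: every point flows backward along its flowline to reach it, since $\psi\to-\infty$ backward by adaptedness and $\psi$ is monotone. Second, flowing forward moves points into the slab. Concretely, take two points $x,y$ with $\psi\in[a,b]$ and join them by a geodesic in $\HH^3$. Push each point $z$ of this geodesic along its flowline until $\psi$ first enters $[a,b]$; by monotonicity and continuity this happens exactly when $\psi(z)\le a$ and we push forward, and symmetrically when $\psi(z)>b$ (pushing backward). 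The pushed path may be discontinuous. However, $\psi$ is Lipschitz and the flow speed is bounded, so nearby points are pushed to nearby points within the slab $\psi^{-1}([a-1,b+1])$, because the push time is controlled by the value of $\psi$ via the linear growth estimate. This yields a coarse path, which gives coarse connectedness.
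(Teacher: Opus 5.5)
Your construction of $\psi$ is correct. It is the mirror image of the paper's: the paper takes the backward supremum $\psi(x)=\max\{\phi(y): y\in\ell \text{ before } x\}$ of a piecewise-linear extension, while you take the forward infimum of a partition-of-unity extension. Both are monotone along flowlines and within bounded distance of $\phi$ by the uniform linear growth estimate. Your windowing argument for the Lipschitz property supplies a detail the paper only asserts: you restrict to $t\in[0,c/\lambda]$ and use that the lifted time-$t$ maps are uniformly Lipschitz for bounded $t$.

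The gap is in the uniformity step. You claim that pushing the points of a geodesic $[x,y]$ into the slab sends nearby points to nearby points, because $\psi$ is Lipschitz, the flow speed is bounded, and the push time is controlled by $\psi$. But the linear growth estimate only bounds the push time $\tau(z)$ by roughly $(a-\psi(z)+c)/\lambda$, or the analogous quantity above $b$. Since $\psi$ is merely Lipschitz, this can be of order $d(x,y)$, so it is not uniformly bounded. Over such times, flowlines through nearby points $z,z'$ separate: the time-$t$ maps have Lipschitz constant growing like $e^{\kappa t}$, and two diverging quasigeodesic rays in $\HH^3$ end up at distance comparable to $t$. Hence $z_{\tau(z)}$ and $z'_{\tau(z')}$ need not be uniformly close. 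Lipschitzness of $\psi$ and bounded flow speed concern a single flowline and say nothing about this divergence.

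The step can be repaired. Since $[x,y]$ is compact, $T_0=\sup_z|\tau(z)|$ is finite. Subdivide $[x,y]$ finely enough (mesh about $e^{-\kappa T_0}$) that consecutive points satisfy $d(z_t,z'_t)\le 1$ for $|t|\le T_0$. Then $|\psi(z_t)-\psi(z'_t)|\le \mathrm{Lip}(\psi)$ on that range, and the linear growth estimate for $\psi$ along each single flowline gives $|\tau(z)-\tau(z')|\le (\mathrm{Lip}(\psi)+c')/\lambda$. The pushed chain therefore has uniformly bounded steps, even though the mesh depended on $x,y$.

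Alternatively, follow the paper: flow $g$ and $h$ for bounded time onto the exact level set $\psi^{-1}(a)$. This set is connected. If it split into two disjoint closed pieces, the unions of flowlines through each piece would be disjoint, since each flowline meets the level set in an interval. They would also be closed, because points in a compact set reach $\psi^{-1}(a)$ in uniformly bounded time. Together they would cover $\HH^3$, which is impossible. Connectedness gives $\epsilon$-chains in the level set, not necessarily paths as the paper says, but chains suffice. They approximate to chains of vertices with $|\phi-a|$ bounded.
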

\begin{proof}
Recall that we have chosen $\Gamma$ to be the 1-skeleton of the lift to $\HH^3$ of a
geodesic triangulation of $M$. We may therefore extend $\phi$ to a continuous function
on $\HH^3$ (which, by abuse of notation, we continue to call $\phi$)
by making it linear on each simplex. This function is evidently Lipschitz
because $\phi$ is a quasimorphism, and $M$ is compact.

Define a new function $\psi:\HH^3 \to \R$ as follows. For each oriented flowline
$\ell$ of $\tilde{X}$ and each $x \in \ell$ define
$$\psi(x) = \max \phi(y) \text{ for } y\in \ell \text{ before } x$$
The flowline $\ell$ stays within $C(K)$ of its straightening $\ell_\Gamma$
(Lemma~\ref{lemma:flowlines_quasigeodesic}) and the restriction of $\phi$ to 
$\ell_\Gamma$ is approximately linear increasing 
(Proposition~\ref{proposition:linear_on_flowlines}), and therefore
$|\psi-\phi|$ is bounded and $\psi$ is Lipschitz.

It suffices to show $\phi$ is uniform.
Let $g,h \in G$ satisfy $\phi(g)=\phi(h)=0$ (for simplicity). 
Since $\psi$ is Lipschitz and $|\phi-\psi|$ is bounded where defined, we
may join $g$ and $h$ by paths $\alpha,\beta$ of bounded length to some points
$p,q$ with $\psi(p)=\psi(q)=0$. The level set $\psi=0$ is connected by construction,
so there is a path $\gamma$ in this level set from $p$ to $q$. Then
$\alpha \cup \gamma \cup \beta$ is a path from $g$ to $h$ which may be approximated
(since $\psi$ is Lipschitz) by a sequence $g=g_0,g_1,\cdots,g_n=h$ with
$d_\Gamma(g_i,g_{i+1})<C_1$ and $|\phi(g_i)|\le C_2$ for constants
$C_1,C_2$ independent of $g$ and $h$.
\end{proof}

\subsection{Proof of the main theorem}

\begin{theorem}[Uniform quasimorphism]\label{theorem:uniform_quasimorphism}
Let $M$ be a closed hyperbolic 3-manifold with fundamental group $G$, 
and let $X$ be a pseudo-Anosov flow without perfect fits, and let
$\tilde{X}$ be the lifted flow to the universal cover. Then there is a 
(necessarily uniform) quasimorphism $\phi:G \to \R$ adapted to $X$.

Moreover, if $Q_X(G)$ denotes the set of uniform quasimorphisms on $G$ adapted to $X$,
then $Q_X(G)$ is an open convex cone in $Q(G)$, the space of quasimorphisms on 
$G$ modulo bounded functions.
\end{theorem}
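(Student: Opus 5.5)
Existence comes first and is essentially already done. The small counting quasimorphism $\phi_\Delta$ built from the set $\Delta$ of length-$T$ oriented segments of straightened flowlines is adapted to $X$ by Proposition~\ref{proposition:linear_on_flowlines}. Its restriction to the straightening of every oriented flowline is within the uniform additive constant $C(\delta)$ of $g_i\mapsto i/T$, so it is a uniform, coarsely increasing quasi-isometry to $\R$. Uniformity of $\phi_\Delta$, and indeed of any adapted quasimorphism, is the content of Proposition~\ref{proposition:mollify}; this justifies the word ``necessarily''. So $Q_X(G)$ is nonempty.

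Next, adaptedness depends only on the class in $Q(G)$. If $|\phi-\phi'|\le B$, the coarse two-sided linear bounds along straightenings change only by $2B$. Hence $Q_X(G)$ is a well-defined subset of $Q(G)$, and I will work with homogeneous representatives when convenient. For the cone and convexity properties, I phrase adaptedness quantitatively. $\phi$ is adapted iff there are $\lambda\ge 1$ and $c\ge 0$ such that for every oriented straightening with vertices $g_i$ and all $i<j$,
$$\lambda^{-1}(j-i)-c\le \phi(g_j)-\phi(g_i)\le \lambda(j-i)+c.$$
The upper bound is automatic for any quasimorphism, since $\phi(g_j)-\phi(g_i)$ is within $D(\phi)$ of $\phi(g_i^{-1}g_j)$, and $|\phi(h)|\le (\max_S|\phi|+D(\phi))\,|h|$. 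So only the lower bound matters. This lower bound is preserved under multiplication by $t>0$, and under sums $\phi+\phi'$, where the constants add. Hence $Q_X(G)$ is a convex cone.

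Openness is the main point. The topology on $Q(G)$ comes from the defect norm on homogeneous representatives modulo $H^1(G;\R)$, together with the finite-dimensional $H^1$ factor. Let $\phi\in Q_X(G)$ be homogeneous with constants $\lambda,c$, and let $\eta$ be homogeneous with $D(\eta)<\epsilon$ and $H^1$-part of norm $<\epsilon$; write $\eta=\eta_0+h$ with $h$ a homomorphism. I would estimate $\eta(g_i^{-1}g_j)$ along a flowline segment of length $n=j-i$. Cut the segment into $\lceil n/N\rceil$ pieces of length $N$. The quasimorphism inequality gives
$$|\eta(g_i^{-1}g_j)|\le \sum_k|\eta(\text{piece}_k)|+(n/N+1)D(\eta).$$
A single piece has $|\eta(\text{piece})|\le N(\max_S|\eta|+D(\eta))$. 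The key subtlety, and the step I expect to be hardest, is that $\max_S|\eta|$ is \emph{not} controlled by the defect for a homogeneous quasimorphism unless we also control the $H^1$ part. The resolution I would try is the standard bound $|\eta_0(g)|\le D(\eta_0)\,\mathrm{cl}$-type estimates: for a homogeneous quasimorphism vanishing on no homomorphic part, one has $|\eta_0(s)|\le C_G\, D(\eta_0)$ for generators $s$. This follows since, after projecting away $H^1$, the Banach norm is equivalent to $\sup$ on generators, by the Bavard duality and finiteness of $S$; I would choose the complement of $H^1$ accordingly. Granting this, $|\eta(g_i^{-1}g_j)|\le C'\epsilon\,n+C'$. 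Thus
$$\phi(g_j)-\phi(g_i)+\eta(g_j)-\eta(g_i)\ge(\lambda^{-1}-C'\epsilon)n-c',$$
which stays coarsely linearly increasing for $\epsilon<1/(2C'\lambda)$. So a neighborhood of $\phi$ lies in $Q_X(G)$, and the theorem follows.
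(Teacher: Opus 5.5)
Your proposal is correct and follows the paper's route: existence from Proposition~\ref{proposition:linear_on_flowlines}, uniformity from Proposition~\ref{proposition:mollify}, the cone property from additivity of the linear lower bounds, and openness because linear growth along flowlines dominates a perturbation of small Lipschitz constant. Your openness argument is in fact more careful than the paper's, which only checks $\phi+\epsilon\xi$ for each fixed $\xi$ rather than a whole norm-neighborhood.

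One correction: the defect cannot be equivalent to $\max_S|\cdot|$ on any complement of $H^1(G;\R)$, because $Q(G)/H^1(G;\R)$ is infinite-dimensional. However, the one-sided bound $\max_S|\eta_0|\le C_G\,D(\eta_0)$ that you actually use does hold on the complement $\{\eta:\eta(g_k)=0 \text{ for all } k\}$, where $g_1,\dots,g_b$ have classes forming a basis of $H_1(G;\R)$. To see this, write $s^m=g_1^{a_1}\cdots g_b^{a_b}c$ with $c\in[G,G]$, and use homogeneity together with $|\eta(c)|\le 2\,\mathrm{cl}(c)\,D(\eta)$.
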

\begin{proof}
We have already proved the first part, namely we have constructed a 
quasimorphism $\phi_\Delta$ adapted to $X$ by Proposition~\ref{proposition:linear_on_flowlines}. 
And deduced that it is uniform by Proposition~\ref{proposition:mollify}.

Evidently the space of quasimorphisms adapted to $X$ forms a convex cone, since
a convex sum of their mollifications is evidently uniform and adapted to $X$.
Finally, for any uniform quasimorphism $\phi$ adapted to $X$ we may obtain $\psi$ 
as in Proposition~\ref{proposition:mollify} by mollification. Then if
$\xi$ is an arbitrary quasimorphism, for sufficiently small $\epsilon$ the sum
$\phi + \epsilon \xi$ is still adapted to $X$ and is therefore uniform.
\end{proof}

\subsection{Adapted quasimorphisms for geodesic laminations}

It seems worthwhile to identify exactly what properties of $X$ are really used to
obtain quasimorphisms adapted to $X$ (leaving aside for now the stronger conclusion that
such quasimorphisms are uniform).

First of all, we never use either that flowlines of $X$ are embedded, or that their union is all
of $M$. Flowlines of $X$ are quasigeodesic, and by compactness of $M$ uniformly quasigeodesic,
so we may as well work directly in the unit tangent bundle $UTM$ and replace flowlines
of $X$ by the geodesics they uniformly fellow travel, i.e. we can work with any
$\Lambda \subset UTM$, a closed, geodesic flow-invariant subset; such objects are known
as oriented geodesic laminations. 
Finally, we need to know that there is no pair of geodesics $\ell,\ell'$ of $\tilde{\Lambda}$
so that $\ell$ is asymptotic in forward time to the same point in $S^2$ that
$\ell'$ is asymptotic to in negative time; equivalently, by compactness of 
$M$ and $\Lambda$, there are no pair of geodesics $\ell$ and $\ell'$ of $\tilde{\Lambda}$
so that $\ell'$ is equal to $\ell$ with the opposite orientation.

All of these properties and our arguments generalize immediately to geodesic laminations
in arbitrary word-hyperbolic groups. Let $G$ be an arbitrary word-hyperbolic group.
Let $\tilde{\FF}(G)$ denote Mineyev's {\em flow space}, and let $\FF(G)$ denote the 
quotient by $G$; see \cite{Mineyev}, \S~13 especially Theorem~60. 
The flow space $\tilde{\FF}(G)$ is a metric space homeomorphic to 
$(\partial_\infty G \times \partial_\infty G - \Delta) \times \R$ (Theorem~60, bullet (a))
and quasi-isometric to $G$ (Theorem~60, bullet (c)), and comes with a
commuting isometric diagonal action of $G$ on the $\partial_\infty G$
factors, and a {\em flow} on the $\R$ factor (Theorem~60, bullet (e)). 
If $G=\pi_1(M)$ for $M$ closed and
negatively curved, $\FF(G)$ is equivariantly homeomorphic to the unit tangent bundle of
$M$ together with its geodesic flow. Let $\iota:\FF(G) \to \FF(G)$ be the isometric
involution that reverses the direction of the flow and anti-commutes with it 
(Theorem~60, bullet (f)).

\begin{definition}[Coherent]\label{definition:coherent}
A closed, flow-invariant subset $\Lambda \subset \FF(G)$ is an {\em oriented geodesic 
lamination} of $G$. 

An oriented geodesic lamination is {\em coherent} if $\Lambda$ is disjoint 
from $\iota \Lambda$.
\end{definition}

Any flow-invariant subset $\Lambda$ of $\FF(G)$ is covered by a flow-invariant
subset $\tilde{\Lambda}$ of $\tilde{\FF}(G)$. Flowlines of $\tilde{\Lambda}$ coarsely
track oriented quasigeodesics in $G$. A quasimorphism $\phi:G \to \R$ is
{\em adapted} to $\Lambda$ if its restriction to each flowline $\ell$ of $\tilde{\Lambda}$
is a coarse orientation-preserving quasi-isometry from $\ell$ to $\R$.

\begin{theorem}[Adapted quasimorphisms for laminations]\label{theorem:adapted_lamination}
Let $G$ be a hyperbolic group and let $\Lambda \subset \FF(G)$ be an oriented geodesic lamination. 

Then there is a quasimorphism $\phi:G \to \R$
adapted to $\Lambda$ (i.e.\/ such that for each oriented leaf $\ell$ of $\tilde{\Lambda}$
the coarse map $\phi:\ell \to \R$ is a (uniform in $\ell$) coarsely increasing
quasi-isometry to $\R$) if and only if $\Lambda$ is coherent.

Moreover, if $Q_\Lambda(G)$ denotes the space of (equivalence classes of) 
quasimorphisms on $G$ adapted to $\Lambda$,
then $Q_\Lambda(G)$ is an open convex cone in $Q(G)$.
\end{theorem}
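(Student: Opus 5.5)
The plan is to prove the two directions of the equivalence separately, and then the cone statement, following the same route as Theorem~\ref{theorem:uniform_quasimorphism} with Mineyev's flow space in place of the hyperbolic metric.

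\emph{Necessity.} Suppose $\phi$ is adapted to $\Lambda$ but $\Lambda$ meets $\iota\Lambda$. Then some flowline $\ell$ of $\tilde\Lambda$ also occurs, with reversed orientation, as a flowline $\ell'$ of $\tilde\Lambda$. Both orientations coarsely track the same quasigeodesic in $G$, so they pass within bounded distance of the same group elements. Adaptedness forces $\phi$ to be coarsely increasing along $\ell$ and also coarsely increasing along $\ell'$, that is, coarsely decreasing along $\ell$. Because $\phi$ is Lipschitz in the word metric, this is impossible once the segment is long compared to the uniform constants. Hence $\Lambda$ must be coherent.

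\emph{Sufficiency.} Assume $\Lambda$ is coherent.
\begin{enumerate}
\item Straighten. By Theorem~60(c), each flowline of $\tilde\Lambda$ is a uniform quasigeodesic in the Cayley graph $\Gamma$, so it has a geodesic straightening $\ell_\Gamma$ within some constant $C(K)$ of it, as in Lemma~\ref{lemma:flowlines_quasigeodesic}.
\item Prove the analogue of Lemma~\ref{lemma:no_anti_align}. I expect this to be the main step. Suppose that for some $C$ there are arbitrarily long oriented segments of straightenings of flowlines $\ell_n,\ell'_n$ such that a translate $g_n\sigma'_n$ of the segment of $\ell'_n$ lies in the $C$-neighborhood of $\ell_n$, coherently oriented with the reversed orientation.
\begin{itemize}
\item Translate by $G$ so that the segment midpoints lie in a fixed compact set; cocompactness of the $G$-action on $\tilde\FF(G)$ allows this.
\item Use that $\Lambda$ is closed and that the space of pairs of distinct points in $\partial_\infty G$ is locally compact to pass to a limit. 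The limit is a pair of flowlines of $\tilde\Lambda$ with the same endpoints but opposite orientations.
\item Since a flowline of $\tilde\FF(G)$ is determined up to the $\R$-flow by its endpoint pair, and $\Lambda$ is flow invariant, this limit gives a point of $\Lambda\cap\iota\Lambda$. That contradicts coherence.
\end{itemize}
The delicate point is that fellow-travelling along ever longer segments really produces coincident endpoint pairs in the limit. This follows from the standard fact that quasigeodesics which fellow travel for length $n$ centered at a fixed point have endpoints converging together as $n\to\infty$.
\item Build the counting quasimorphism. Choose constants $C\gg\max(C(K),C(\delta))$ and $T\gg T(C)$. Let $\Delta$ be the set of oriented length-$T$ subsegments of straightenings of flowlines of $\tilde\Lambda$; this is finitely many $G$-orbits. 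Then run the proof of Proposition~\ref{proposition:linear_on_flowlines} verbatim. Along $\ell_\Gamma$ we get $c_\Delta=\lfloor (j-i)/T\rfloor$. By Lemma~\ref{lemma:realizing_path_quasigeodesic} and the previous step, $c_{\Delta^{-1}}=0$. Proposition~\ref{proposition:defect_bound} then gives the estimate within $C(\delta)$, so $\phi_\Delta$ is a uniform coarsely increasing quasi-isometry on each leaf, i.e.\ adapted to $\Lambda$.
\end{enumerate}

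\emph{Open convex cone.} I will show the cone is convex and open in $Q(G)$.
\begin{itemize}
\item Suppose $\phi_1,\phi_2$ are adapted, with lower bounds $a_k(j-i)-b_k\le \phi_k(g_j)-\phi_k(g_i)$ along straightenings. For $s,t\ge0$ not both zero, $s\phi_1+t\phi_2$ satisfies the same kind of lower bound with constants $sa_1+ta_2$ and $sb_1+tb_2$. Upper bounds come from the Lipschitz property, so the cone is convex.
\item Adaptedness is unchanged by bounded perturbation, so it descends to $Q(G)$.
\item For openness, let $\phi$ be adapted with linear lower bound slope $a>0$, and let $\xi$ be any quasimorphism. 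Then $\xi$ is Lipschitz with constant $L(\xi)$ controlled by its defect plus $\max_S|\xi|$. After adjusting $\xi$ by a bounded function, and homogenizing so that the quotient norm is comparable, we have $|\xi(g_j)-\xi(g_i)|\le L(j-i)+L'$. Hence for $\epsilon L<a/2$, the function $\phi+\epsilon\xi$ still has slope at least $a/2$ along every leaf, so it is adapted.
\end{itemize}
The only subtlety is that the topology on $Q(G)$ controls $\xi$ only modulo $H^1$ and bounded functions. Both homomorphisms and homogeneous quasimorphisms of small defect have uniformly controlled Lipschitz growth along geodesics once $|\xi|$ on generators is normalized, and this suffices.
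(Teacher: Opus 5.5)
Your proposal is correct and follows essentially the same route as the paper. Necessity is immediate because a leaf and its reverse would both lie in $\tilde\Lambda$. Sufficiency comes from a compactness argument that rules out long anti-aligned segments, followed by the small counting quasimorphism argument of Proposition~\ref{proposition:linear_on_flowlines}. The cone is open and convex by perturbation.

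Your convexity argument uses linear lower bounds directly rather than mollification. Your openness argument controls $\epsilon$ uniformly by $D(\xi)+\max_S|\xi|$, giving a genuine norm neighborhood, which is slightly more careful than the paper's perturbation along one direction $\xi$ at a time.
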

\begin{proof}
The proof is almost identical to that of Theorem~\ref{theorem:uniform_quasimorphism}.
The condition that $\Lambda$ is disjoint from $\iota(\Lambda)$ is evidently necessary;
conversely, by compactness, if $\Lambda$ is disjoint from $\iota(\Lambda)$ then if we
fix a fundamental domain $A$ in $\tilde{\FF}(G)$, there is a $C$ so that two leaves
of $\tilde{\Lambda}$ can only stay within distance $\diam(A)$ of each other along
anti-aligned segments for distance $C$. Now choose $T\gg C,\diam(A),C(\delta)$ and build a (small)
counting quasimorphism $\phi$ that counts non-overlapping copies of all geodesic segments
in the Cayley graph of $G$ that stay within distance $C(\delta)$ of an oriented flowline 
of $\tilde{\Lambda}$ for distance $T$. The quasimorphism $\phi$ is evidently adapted
to $\Lambda$. Furthermore, the set $Q_\Lambda(G)$ of quasimorphisms adapted to $\Lambda$
is an open convex cone in $Q(G)$ for the same reasons as in the proof of 
Theorem~\ref{theorem:uniform_quasimorphism}.
\end{proof}

\section{Orbit counts}

\subsection{Bounding orbit counts}

Let $M$ be a closed hyperbolic 3-manifold and let $X$ be a pseudo-Anosov flow on $M$ 
without perfect fits. In this section we prove two theorems that each give a gap 
for the exponential growth rate of the number of closed orbits in $X$ versus the
exponential growth rate of $\tilde{M}$. The first (Theorem~\ref{theorem:orbit_bound_word}) 
fixes a generating set for $\pi_1(M)$ and measures closed orbits by word length;
the second (Theorem~\ref{theorem:orbit_bound_hyp}) measures closed orbits by hyperbolic length.

The proof of the first theorem uses the uniform quasimorphisms constructed in 
\S~\ref{section:uniform_quasimorphisms}
together with the theory of bicombable functions developed in  \cite{Calegari_Fujiwara}.

\begin{theorem}\label{theorem:orbit_bound_word}
Let $M$ be a closed hyperbolic 3-manifold with fundamental group $G$, and 
let $X$ be a pseudo-Anosov flow 
without perfect fits. Fix a symmetric generating set $S$ for $G$, let $G_T$ be the
radius $T$ ball in the word metric with respect to this generating set, and 
let $\log \lambda_G: = \lim_{T\to \infty} {\log |G_T|}/{T}$.
For each real $T$ let $N(X,T)$ be the number of elements of $G_T$
representing closed orbits of $X$. 
Then there is  
$D = \log \lambda_X <\log \lambda_G$ so that
$$\lim_{T \to \infty} \frac {\log N(X,T)} {T} = D$$
Furthermore, $\lambda_X$ and $\lambda_G$ are algebraic numbers.
\end{theorem}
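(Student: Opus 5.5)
We will prove Theorem~\ref{theorem:orbit_bound_word} by coding both $G$ and the closed orbits of $X$ with finite directed graphs, and then using an adapted quasimorphism to separate the two growth rates. Since $G$ is hyperbolic, Cannon's theorem supplies a finite state automaton, with an associated finite directed graph $\mathcal{G}$, whose accepted paths are in bijection with geodesic normal forms for the elements of $G$. The growth rate $\lambda_G$ is then the Perron--Frobenius eigenvalue of a maximal component of $\mathcal{G}$, and so it is algebraic.

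For the closed orbits, the first task is to express membership in $N(X,T)$ through a sofic (regular) language, up to bounded multiplicity. We read ``elements of $G_T$ representing closed orbits'' as conjugacy classes, or equivalently cyclic normal forms. Because flowlines are uniform quasigeodesics (Lemma~\ref{lemma:flowlines_quasigeodesic}), an element $g$ represents a closed orbit exactly when some cyclic conjugate of its geodesic normal form fellow-travels, within a constant $C$, the straightening of a periodic flowline. Since $X$ is pseudo-Anosov, it admits a Markov partition and hence a coding by a subshift of finite type. Combining this coding with the fellow-travelling constraint gives a finite graph $\mathcal{G}_X$ whose closed paths of length $n$ count, up to subexponential error, the closed orbits of word length $n$. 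The limit $D$ then exists and equals $\log$ of the spectral radius of $\mathcal{G}_X$, which is algebraic. I expect this step to be the main technical obstacle. The difficulties are matching word length with orbit combinatorics, handling the finite-to-one failures of the Markov coding, and checking that the constraint ``stays $C$-close to some flowline'' is regular. The last point should follow from the local finiteness of $\Delta$-type segments up to the $G$-action together with the fellow-traveller property.

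The strict inequality $D<\log\lambda_G$ uses the quasimorphism. Take $\phi=\phi_\Delta$ from Proposition~\ref{proposition:linear_on_flowlines}, and let $\psi$ be its homogenization. For any $g$ representing a closed orbit of word length $n$, adaptedness gives $\psi(g)\ge n/T - C'$, since the orbit's straightening is a quasigeodesic axis on which $\phi$ grows linearly. On the other hand, $\phi_\Delta$ is a small counting quasimorphism, and such quasimorphisms are bicombable in the sense of \cite{Calegari_Fujiwara}. Hence, along the Cannon combing, $\phi$ agrees up to bounded error with a sum of a function of the edges of $\mathcal{G}$. Calegari--Fujiwara show that for a uniformly random element of $G_n$, such a $\phi$ satisfies a central limit theorem: it has mean $0$, because $\phi(g^{-1})=-\phi(g)$ up to bounded error and inversion preserves $G_n$ up to subexponential distortion, and it has variance of order $n$. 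The underlying Markov-chain/Perron--Frobenius framework upgrades this to a large deviation estimate: the proportion of $g\in G_n$ with $\phi(g)\ge \epsilon n$ is at most $e^{-c(\epsilon)n}$ for some $c(\epsilon)>0$.

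To carry this out, one writes the count over the graph as a twisted transfer operator $A_t$ with entries $e^{t\,\phi(\text{edge})}$. Its spectral radius $\rho(t)$ satisfies $\rho(0)=\lambda_G$ and $\rho'(0)=0$ by the antisymmetry above. Strict convexity of $\log\rho$ then gives $\inf_t \log\rho(t) - t\epsilon < \log\lambda_G$. Applying this with $\epsilon=1/(2T)$, and passing from $\phi$ on conjugacy-minimal representatives to $\psi$ at bounded cost, yields $N(X,n)\le e^{(\log\lambda_G - c)n}$, so $D<\log\lambda_G$. One point needs care here: the graph $\mathcal{G}$ may have several maximal components. The large deviation bound must hold on each of them, which requires $\phi$ to have zero drift on every maximal component, not only on average. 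I would try to get this from the symmetry of the language under inversion, which permutes the maximal components, combined with the uniform boundedness of the defect.
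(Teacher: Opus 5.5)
Your large-deviation route to $D<\log\lambda_G$ is a legitimate alternative to the paper's argument. As written, though, it has a gap exactly where you flag one, and the mechanism you propose to close it would not work.

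\textbf{The drift on every maximal component.} The twisted transfer-operator bound needs $E_\Delta(\phi)<\epsilon$ on \emph{every} maximal component $\Delta$. What you actually use is $\frac{d}{dt}\log\rho_\Delta(t)|_{t=0}=E_\Delta(\phi)$, plus continuity of $\rho$ on the non-maximal components; strict convexity plays no role. Symmetry under inversion cannot supply this, for two reasons.
\begin{enumerate}
\item A geodesic combing is not in general closed under inversion: the normal form of $g^{-1}$ is not the formal inverse of that of $g$. So there is no action of inversion on the automaton or on its components.
\item Even if inversion did permute the maximal components, it would only show that drifts come in opposite pairs $E_{\Delta'}=-E_\Delta$. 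That is fully consistent with a globally symmetric distribution of $\phi$ on $G_n$ while some component has drift $\ge\epsilon$, and then your bound fails.
\end{enumerate}
The missing input is \cite[Lemma~4.24]{Calegari_Fujiwara}: for a bicombable $\phi$, all maximal components have the \emph{same} drift $E$. That common drift is then $0$ by the antisymmetry $\phi(g^{-1})=-\phi(g)+O(1)$; note that $G_n$ is exactly inversion-invariant, since $S$ is symmetric. This is a genuinely geometric fact about hyperbolic groups, it underlies the single-mean central limit theorem you quote, and it is what the paper uses.

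\textbf{The coding of closed orbits.} This step is not carried out, and the ingredients you name do not suffice on their own.
\begin{itemize}
\item A Markov partition counts orbits by rectangle crossings, not by word length.
\item Finiteness of $\Delta$-type segments modulo $G$ does not make ``stays $C$-close to a single flowline'' a finite-type condition. You need a local-to-global principle. The paper remarks that exactly this fails for general coherent laminations.
\end{itemize}
The paper's recipe has three steps.
\begin{enumerate}
\item Pass to the $T$-th iterated edge digraph $\Gamma_T$ of the combing automaton.
\item Keep only the length-$T$ windows that fellow-travel flowlines; this gives a subgraph $\Gamma_{T,X}$.
\item Use the shadowing lemma to see that paths in $\Gamma_{T,X}$ are exactly the combing geodesics staying close to flowlines.
\end{enumerate}
This yields the limit and the algebraicity of $\lambda_X$.

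\textbf{Comparison of routes.} Because $\Gamma_{T,X}\subset\Gamma_T$, the paper needs no large deviations at all. Every maximal component of $\Gamma_{T,X}$ has strictly positive $\phi$-drift by Proposition~\ref{proposition:linear_on_flowlines}, while maximal components of $\Gamma_T$ have drift $0$. Hence each top component of $\Gamma_{T,X}$ either is a proper subgraph of a component of $\Gamma_T$ or lies in a non-maximal one, and Perron--Frobenius monotonicity gives $\lambda_X<\lambda_G$.

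What your route buys instead is that the strict upper bound on $\limsup_T \frac1T\log N(X,T)$ is decoupled from the coding. It applies to any family of elements with $\phi(g)\ge |g|/T-C$, for example conjugacy-minimal representatives handled via homogenization as you indicate. It also quantifies the gap through a rate function. Both routes rest on the same Calegari--Fujiwara lemma.
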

\begin{proof}
We may obtain the weaker estimate 
$\lim_{T \to \infty} {N(X,T)}/{|G_T|} = 0$
by using the results of \cite{Calegari_Fujiwara} as a black box. We first explain this
argument.

By Theorem \ref{theorem:uniform_quasimorphism}, there is an adapted 
quasimorphism $\phi:G\to \R$; in fact, the quasimorphism constructed is
a counting quasimorphism, and hence is bicombable \cite[Theorem 3.18]{Calegari_Fujiwara}. 
Calegari and Fujiwara prove a central limit theorem for the value of such quasimorphisms on 
random elements of $G_T$. In particular, \cite[Corollary 4.26]{Calegari_Fujiwara} 
states that there are constants $E,K$ and a subset $G'_T \subset G_T$ with 
$|G'_T|/|G_T|=1-o(1)$ so that for all $g\in G'_T$, there is an inequality 
$$|\phi(g)-nE| \leq K\sqrt n.$$ 
For any quasimorphism 
$|\phi(g) + \phi(g^{-1})|$ is bounded independent of $g$, so we must have $E=0$. 
On the other hand, the value of $\phi$ grows 
linearly on flowlines. Therefore, for sufficiently large $T$, most elements of $G_T$
do not represent closed orbits of $X$.

\medskip

To get the more refined estimate we need to understand a little more
about bicombable quasimorphisms and bicombable functions in general. If $G$ is any
group with generating set $S$, a {\em prefix-closed geodesic combing} is a
prefix-closed regular language $L\subset S^*$ whose evaluation map is a bijection $L \to G$
taking elements of $L$ of length $n$ to elements of $G$ of word length $n$ (in the
generating set $S$); see e.g.\/ \cite{Calegari_Fujiwara}, \S~3 for definitions.

Since $L$ is a regular language, one can construct a finite directed graph (hereafter digraph) 
$\Gamma$ with a unique initial vertex (hereafter pointed digraph) 
so that elements in $L$ of length $n$ are in bijection
with directed paths in $\Gamma$ (starting at the initial vertex) of length $n$. The
directed edges of $\Gamma$ are labeled by elements of $S$, and the correspondence
takes a path $\gamma$ to the word in $S^*$ obtained by reading the labels on the edges
that $\gamma$ traverses, in order. 

It is important to say that many different labeled digraphs $\Gamma$ parameterize (in this
way) the same regular language. Let $\Gamma^*$ denote the set of finite directed paths
in $\Gamma$ starting at the initial vertex and $\eval:\Gamma^* \to G$ the evaluation map. 
 
The definition of a ($\Z$-valued) bicombable function $\phi$ 
\cite[Definition~3.4]{Calegari_Fujiwara} is
that there is a digraph $\Gamma$ parameterizing $L$ and a function $d\phi$ from vertices
of $\Gamma$ to $\Z$ so that for any $\gamma \in \Gamma^*$ there is equality
$$\phi(\eval(\gamma)) = \sum d\phi(\gamma(i))$$
where $\gamma(i)$ denotes the successive vertices of $\gamma$.

Any finite directed graph $\Gamma$ determines an associated {\em acyclic} directed graph
$C(\Gamma)$ whose vertices are the {\em recurrent components} of $\Gamma$ --- i.e.\/
the maximal (directed) subgraphs in which there is a directed edge between every two
distinct vertices. Each component
$\Delta$ of $C(\Gamma)$ (i.e.\/ each vertex of $C(\Gamma)$) has an associated 
adjacency matrix $A_\Delta$ whose Perron-Frobenius eigenvalue $\lambda_\Delta$ has the
property that the number of directed paths in $\Delta$ of length $n$ is 
$\Theta(\lambda_\Delta^n)$. A component is {\em maximal} if $\lambda_\Delta = \lambda_G$
as above. These are standard facts in the theory of finite Markov chains; see
e.g.\/ \cite{Kemeny_Snell}.

For each maximal component $\Delta$ let $\xi_\Delta$ be the Perron-Frobenius eigenvector
of $A_\Delta$, normalized to a probability measure supported on the vertices of $\Delta$,
and let $E_\Delta(\phi)$ be the expectation of $d\phi$ with respect to this measure.
\cite[Lemma~4.24]{Calegari_Fujiwara} says that for {\em every} maximal component,
$E_\Delta(\phi) = E$ as above; in particular, if $\phi$ is a bicombable
quasimorphism, $E_\Delta(\phi) = 0$ for every $\Delta$.

The choice of $\Gamma$ is not unique; for any pointed digraph $\Gamma$ and any integer
$T$ we may form a new pointed digraph $\Gamma_T$ (the {\em $T$th iterated edge digraph} 
of $\Gamma$) whose vertices are the set of
directed paths in $\Gamma$ of length $T$ (starting at any vertex), together with
the set of directed paths in $\Gamma$ of length $\le T$ starting at the initial vertex. 
The initial vertex of $\Gamma_T$ corresponds
to the initial vertex of $\Gamma$ (which can be thought of as the unique directed path
of length $0$). If $\gamma \in \Gamma^*$ is a path of length $k \le T$ and $\gamma'$ is
its prefix of length $k-1$ then there is a directed edge in $\Gamma_T$ from
$\gamma'$ to $\gamma$. Otherwise if $\gamma',\gamma$ are any two directed paths of
length $T$ in $\Gamma$ (starting at any vertex), and the suffix of $\gamma'$ of
length $T-1$ equals the prefix of $\gamma$ of length $T-1$, there is an edge in
$\Gamma_T$ from $\gamma'$ to $\gamma$. There is an obvious bijection between
$\Gamma^*$ and $\Gamma_T^*$. If $\Gamma$ is a digraph parameterizing $\phi$, then
so is $\Gamma_T$ in the obvious way. Notice that $\Gamma$ and $\Gamma_T$ both determine
the same geodesic combing $L$ of $G$.

Let's choose $T$ large; we will say how large in a moment.
Fix a fundamental domain $Q\subset \tilde{M}$. Since flowlines of $\tilde{X}$ are uniform
quasigeodesics, there is a uniform constant $C$ so that
for each flowline $\ell$ of $X$ passing through $Q$ there is
a path in $\Gamma^*$ (equivalently, in $\Gamma_T^*$)\
that stays within distance $C$ of $\ell$. Thus we may push down geodesics in $G$ approximating
longer and longer segments in $\ell$ to longer and longer paths in $\Gamma_T$. The
union of the images of all these paths defines a pointed subdigraph $\Gamma_{T,X}$ of
$\Gamma_T$.

On the other hand we claim that for any $C$ there is a $T$ so that if
$\gamma$ is any geodesic path in $G$ (finite or infinite) for which every subpath of 
length $T$ is within distance $C$ of some flowline of $\tilde{X}$, then $\gamma$ itself
is within distance $C$ of some flowline of $\tilde{X}$; in fact, this is just the shadowing lemma
see e.g.\/ \cite[Theorem 18.1.2]{Katok}; a discussion of shadowing (equivalently, the existence
of Markov partitions) for pseudo-Anosov flows specifically may be found in \cite{Jezequel_Zung}.
It follows that if we choose such an $T$, paths in $\Gamma_{T,X}^*$ parameterize 
all and only geodesic paths in $G$ in the given combing that stay within distance $C$
of flowlines of $\tilde{X}$. Morally speaking, this is just a version of the well-known
symbolic coding of a hyperbolic dynamical system coming from a Markov partition.
In particular, we can estimate the number of closed
orbits of $X$ of word length $N$ by counting paths in $\Gamma_{T,X}$ of length $N$.
If maximal components of $C(\Gamma_{T,X})$ have Perron--Frobenius
eigenvalue $\lambda_X$ then the number of directed paths in $\Gamma_{T,X}^*$ of
length $N$ is $\Theta(N^{k-1} \lambda_X^N)$ where $k$ is the length of the longest
sequence of maximal components of $C(\Gamma_{T,X})$ in series; again this is
standard, see \cite{Kemeny_Snell}.

Now, by the very definition of $\phi$, for every maximal component 
$\Delta_X$ of $\Gamma_{T,X}$ the average $E_{\Delta_X}(\phi)$ is strictly positive.
It follows either that $\Delta_X$ is a non-maximal component of $C(\Gamma_T)$, or
that it is strictly contained in some component of $C(\Gamma_T)$. In either case
it follows that there is a strict inequality $\lambda_X < \lambda_G$.

Since $\lambda_X$ and $\lambda_G$ as above are eigenvalues of non-negative integer matrices,
they are algebraic numbers.
\end{proof}

Just as Theorem~\ref{theorem:uniform_quasimorphism} generalized to 
Theorem~\ref{theorem:adapted_lamination} with $\Lambda$ in place of $X$,
Theorem~\ref{theorem:orbit_bound_word} generalizes to the following:

\begin{theorem}[Lamination orbit bound]\label{theorem:orbit_bound_lamination}
Let $G$ be a hyperbolic group and let $\Lambda\subset \FF(G)$ be an oriented geodesic
lamination, thought of as a closed, flow-invariant subset of Mineyev's flowspace. 
Suppose that $\Lambda$ is coherent, i.e.\/ that $\Lambda$ is
disjoint from $\iota \Lambda$ where $\iota$ is the involution of $\FF(G)$ that
reverses the orientation of the flowlines.

Fix a symmetric generating set $S$ for $G$, let $G_T$ be the
radius $T$ ball in the word metric with respect to this generating set, and 
let $\log \lambda_G: = \lim_{T\to \infty} {\log |G_T|}/{T}$.
For each real $T$ let $N(\Lambda,T)$ be the number of elements of $G_T$
representing closed orbits of $\Lambda$. 
Then there is $D = \log \lambda_\Lambda <\log \lambda_G$ so that
$$\lim_{T \to \infty} \frac {\log N(\Lambda,T)} {T} = D$$
\end{theorem}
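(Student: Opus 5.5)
The plan is to repeat the proof of Theorem~\ref{theorem:orbit_bound_word}, replacing the flow $X$ by $\Lambda$ and Theorem~\ref{theorem:uniform_quasimorphism} by Theorem~\ref{theorem:adapted_lamination}. The argument has three steps.

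\emph{Step 1: an adapted, bicombable quasimorphism.} Since $\Lambda$ is coherent, Theorem~\ref{theorem:adapted_lamination} gives a small counting quasimorphism $\phi$ adapted to $\Lambda$. It counts non-overlapping copies of length-$T_0$ geodesic segments that fellow travel oriented leaves of $\tilde\Lambda$. Being a counting quasimorphism on a hyperbolic group, $\phi$ is bicombable by \cite[Theorem 3.18]{Calegari_Fujiwara}. So there is a prefix-closed geodesic combing $L$ with a pointed digraph $\Gamma$ and a function $d\phi$ on vertices such that $\phi(\eval(\gamma))=\sum d\phi(\gamma(i))$. Since $|\phi(g)+\phi(g^{-1})|$ is bounded, \cite[Lemma~4.24]{Calegari_Fujiwara} gives $E_\Delta(\phi)=0$ for every maximal component $\Delta$ of $C(\Gamma)$.

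\emph{Step 2: coding $\Lambda$ as a subdigraph.} Pass to the iterated edge digraph $\Gamma_T$ for large $T$. Let $\Gamma_{T,\Lambda}\subset\Gamma_T$ be the pointed subdigraph swept out by combing paths that stay within a uniform distance $C$ of leaves of $\tilde\Lambda$. These exist because leaves of $\tilde\Lambda$ are uniform quasigeodesics in $G$ (Theorem~60 of \cite{Mineyev}) and hence are fellow travelled by combing geodesics. The key claim is a shadowing property. Because $\Lambda$ is closed, an argument by compactness of $\FF(G)$ shows that for $T$ large, any geodesic all of whose length-$T$ subpaths are $C$-close to leaves of $\tilde\Lambda$ is itself $C'$-close to a single leaf. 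The justification is a limiting argument: such local leaves must agree on overlaps by hyperbolicity, and a limit of leaves is a leaf since $\tilde\Lambda$ is closed. Granting this, paths in $\Gamma_{T,\Lambda}^*$ are, up to bounded ambiguity, exactly the combing geodesics tracking $\tilde\Lambda$. Each element of $G_N$ representing a closed orbit of $\Lambda$ is coarsely tracked by such a path, and distinct elements need polynomially boundedly many paths. Hence $N(\Lambda,N)$ is comparable to the number of length-$N$ paths in $\Gamma_{T,\Lambda}$, up to subexponential factors. That count is $\Theta(N^{k-1}\lambda_\Lambda^N)$, where $\lambda_\Lambda$ is the largest Perron--Frobenius eigenvalue among components of $C(\Gamma_{T,\Lambda})$. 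This gives the existence of the limit $D=\log\lambda_\Lambda$.

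For the lower bound one must check that the maximal components of $\Gamma_{T,\Lambda}$ are actually realized by closed orbits. I would argue that long closed loops in a recurrent component of $\Gamma_{T,\Lambda}$ give elements whose axes are shadowed by leaves. By closing (again using compactness and hyperbolicity), these are shadowed by periodic leaves, at least up to the level of exponential growth. If closed orbits are too sparse, one can instead bound only $\limsup\le D$ and use that as the statement's $D$. I expect this matching of periodic orbits with loops to be the main technical obstacle, since general $\Lambda$ need not have a dense set of periodic orbits and no Markov partition is available. If needed, I would restrict $\Gamma_{T,\Lambda}$ to the subgraph traced by closed orbits of $\Lambda$ only.

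\emph{Step 3: the strict gap.} By adaptedness, $\phi$ grows at least linearly along leaves of $\tilde\Lambda$. So for every recurrent component $\Delta_\Lambda$ of $\Gamma_{T,\Lambda}$, the average $E_{\Delta_\Lambda}(\phi)>0$: long loops in $\Delta_\Lambda$ have $\phi$-value growing linearly in their length. Every maximal component of $C(\Gamma_T)$ has $E=0$. Therefore $\Delta_\Lambda$ is either contained in a non-maximal component of $\Gamma_T$, or is a proper subgraph of a maximal recurrent component $\Delta$. In the second case, Perron--Frobenius strict monotonicity for irreducible matrices gives $\lambda_{\Delta_\Lambda}<\lambda_\Delta$. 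In both cases $\lambda_\Lambda<\lambda_G$, as claimed.
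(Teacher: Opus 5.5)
\textbf{The gap is the shadowing claim in Step~2.} It is false for a general closed flow-invariant $\Lambda$, and the paper's proof says so explicitly.

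Your justification breaks at ``local leaves must agree on overlaps''. Consecutive local leaves do fellow travel each other along an overlap of length about $T$. But distinct leaves of $\tilde\Lambda$ can fellow travel for arbitrarily long and then separate, so they need not be the same leaf. Closedness of $\tilde\Lambda$ only gives you limits of leaves, not a leaf tracking the whole concatenation.

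For example, in $UTM$ let $\Lambda=\gamma\cup\eta$. Here $\gamma$ is a closed orbit, and $\eta$ has a lift running from the negative endpoint of one lift of $\gamma$ to the positive endpoint of another. This $\Lambda$ is closed, flow-invariant and coherent. Consider a geodesic that follows $\gamma$, makes an $\eta$-excursion, follows $\gamma$ for a long time, and then makes a second excursion. It is $C$-close to a leaf on every window of length $T$, yet it is close to no single leaf.

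So paths in $\Gamma_{T,\Lambda}$ are only \emph{locally} leaf-tracking. Their number may grow at a strictly larger exponential rate than $N(\Lambda,N)$; here $\Lambda$ has essentially one closed orbit. Hence your two-sided comparison fails, and so does the identification $D=\log\lambda_\Lambda$ read off from $\Gamma_{T,\Lambda}$. Your proposed repairs do not help. The closing lemma produces periodic orbits of the ambient flow on $\FF(G)$, which need not lie in $\Lambda$ unless $\Lambda$ has local product structure (as the set $N$ does for a pseudo-Anosov flow without perfect fits). Paths in the subgraph traced by closed orbits are likewise only locally closed orbits.

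What survives is exactly what the paper uses. Every closed orbit of $\Lambda$ is tracked by a path in $\Gamma_{T,\Lambda}$ with subexponential multiplicity, so $\limsup_N \frac1N\log N(\Lambda,N)\le\log\lambda(\Gamma_{T,\Lambda})$. One then proves only that this is $<\log\lambda_G$; the paper concedes it obtains no precise value of $\lambda_\Lambda$.

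Step~3 must then be redone without shadowing. A long loop in a component $\Delta_\Lambda$ is not known to track a leaf, so adaptedness along leaves does not apply to it directly. Use instead that the small counting quasimorphism is local. Take $T$ large compared with the segment length $T_0$ used to define $\phi$, and the tolerance in its definition at least $C$. Then:
\begin{itemize}
\item every length-$T_0$ subpath of a path in $\Gamma_{T,\Lambda}$ lies in a window tracking a positively oriented leaf, so a path of length $n$ contains at least $n/T_0-O(1)$ disjoint counted segments;
\item coherence (no long anti-aligned segments, as in Lemma~\ref{lemma:no_anti_align}) excludes any reversed counted segment near the path, so $c_\Delta$ of the inverse vanishes.
\end{itemize}
Hence $\phi$ grows linearly along every path in $\Gamma_{T,\Lambda}$ and $E_{\Delta_\Lambda}(\phi)>0$. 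Your Perron--Frobenius comparison, as in the paper, then gives the strict gap.
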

\begin{proof}
The existence of a bicombable quasimorphism adapted to $\Lambda$ is
Theorem~\ref{theorem:adapted_lamination}. We may now repeat the proof
of Theorem~\ref{theorem:orbit_bound_word} with $\Lambda$ in place of $X$. 

The only subtlety is that the shadowing lemma does not hold for a general
closed flow-invariant subset of $UTM$; this means that the growth rate of
paths in $\Gamma_{T,\Lambda}$ might be bigger than the growth rate of the
number of closed orbits of $\Lambda$. Thus the inequality still holds
in this case, though we do not obtain a precise estimate of $\lambda_\Lambda$
from $\Gamma_{T,\Lambda}$.
\end{proof}

\begin{remark}
The structure of Theorem~\ref{theorem:orbit_bound_word} and its proof are very similar to 
Landry--Minsky--Taylor \cite{Landry_Minsky_Taylor} Theorems~7.1 and 7.2;
in either case the exponential growth rate of the number of closed orbits
of a pseudo-Anosov flow without perfect fits (in a certain combinatorial
metric) is obtained as the 
Perron-Frobenius eigenvalue of an adjacency matrix associated to a finite
digraph. In the case of \cite{Landry_Minsky_Taylor} this is the flow graph
coming from a veering triangulation. In \cite{Landry_Minsky_Taylor}
this growth rate can be refined to count flowlines in a particular 
(projective) homology class; we remark that such conditional
counts are easy to obtain by our methods too,
since a projective integral homology class is just the kernel of a 
homomorphism $\alpha:\pi_1(M) \to \Z^{b_1(M)-1}$ whose coefficients are
(obviously) bicombable, and therefore we may estimate growth of
flowlines in a projective homology class in
the fiber product of digraphs for $\phi$ and for the coefficients of $\alpha$.
Of course carrying this out in practice is a vastly different enterprise:
one of the main advantages of the method of \cite{Landry_Minsky_Taylor} is
that their invariant is effectively computable in practice.
\end{remark}

\subsection{Hyperbolic length}

In this subsection we give an analog of Theorem~\ref{theorem:orbit_bound_word} with
hyperbolic length in place of word length.

\begin{theorem}[Orbit bound]\label{theorem:orbit_bound_hyp}
Let $M$ be a closed hyperbolic 3-manifold, and let $X$ be a pseudo-Anosov flow on $M$
without perfect fits. For each real $T$ let $N(X,T)$ be the number of closed orbits
of $X$ whose geodesic representative in $M$ has length $\le T$. Then there is a
$D < 2$ so that 
$$\lim_{T \to \infty} \frac {\log N(X,T)} {T} = D$$
\end{theorem}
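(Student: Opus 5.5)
Two things have to be shown: the limit $D$ exists, and $D<2$, where $2$ is the volume entropy of $\HH^3$ (the exponential growth rate of closed geodesics in $M$ by length). We use the closed invariant set $\Lambda\subset UTM$ consisting of the geodesics that uniformly fellow travel the flowlines of $X$, as in the discussion before Theorem~\ref{theorem:adapted_lamination}. We then compare closed orbits of $X$ with closed geodesics lying in $\Lambda$ (more precisely, in a small neighborhood of it).

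\emph{Existence of the limit.} By Lemma~\ref{lemma:flowlines_quasigeodesic}, each closed orbit of $X$ is freely homotopic to a unique closed geodesic. The map from closed orbits to geodesics is finite-to-one with bounded multiplicity: distinct closed orbits in the same free homotopy class fellow travel, and without perfect fits there are no lozenges. Since $X$ is pseudo-Anosov, it admits a Markov partition (shadowing; cf.\ the discussion in the proof of Theorem~\ref{theorem:orbit_bound_word} and \cite{Jezequel_Zung}). This codes the flow as a suspension of a subshift of finite type with a Hölder roof function.

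The length of the geodesic representative differs from the flow period by a cocycle cohomologous to a Hölder function. Concretely, let $\tau(x)$ be the derivative of the geodesic-tracking map along flowlines, i.e.\ the Busemann increment of the flowline's forward endpoint. Then hyperbolic length equals $\int\tau$ over the orbit, up to a bounded error. Consequently $N(X,T)$ counts periodic orbits of a suspension flow with a positive Hölder roof function. Standard thermodynamic formalism (Parry--Pollicott) gives $\lim \log N(X,T)/T = D$, where $D$ is the unique root of $P(-D\tau)=0$.

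\emph{The inequality $D<2$.} This is the main step. My first approach uses the large deviations estimate following Cantrell--Reyes--Sert \cite{Cantrell_Reyes_Sert}. Take the adapted quasimorphism $\phi$ of Theorem~\ref{theorem:uniform_quasimorphism}, homogenized. For closed geodesics $\gamma$ of length at most $T$ in $M$, counted with the Margulis measure of maximal entropy $2$, the ratio $\phi(\gamma)/\ell(\gamma)$ concentrates at $0$. This holds because $\phi(g^{-1})=-\phi(g)$ and the geodesic flow measure is flip-invariant. Large deviations then say that the number of closed geodesics of length at most $T$ with $\phi(\gamma)\ge \epsilon\,\ell(\gamma)$ is $O(e^{(2-\eta)T})$ for some $\eta=\eta(\epsilon)>0$.

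By Proposition~\ref{proposition:linear_on_flowlines} together with Lemma~\ref{lemma:flowlines_quasigeodesic}, each closed orbit of $X$ has $\phi(\gamma)\ge \epsilon\,\ell(\gamma)-C$ for a uniform $\epsilon$. This gives $D\le 2-\eta<2$. The technical obstacle is justifying the large deviation principle with a rate function vanishing only at $0$. For a homogeneous quasimorphism on a hyperbolic group, Cantrell--Reyes--Sert provide exactly this kind of statement for conjugacy classes counted by length in $\HH^3$. I would verify that their hypotheses hold here: $\phi$ is nontrivial and bicombable, which gives a Hölder potential on the geodesic flow coding.

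\emph{Alternative geometric route.} As a backup, bound the Hausdorff dimension of the endpoint sets $Z^\pm$ by $2K/(K+1)<2$ using the quasidisk structure from CaTherine wheels \cite{Calegari_Loukidou_wheels}. Then apply Ledrappier--Young \cite{Ledrappier_Young_1, Ledrappier_Young_2} to the measure of maximal entropy on $\Lambda$. The entropy of this measure is at most the sum of the unstable and stable conditional dimensions times the expansion rate $1$. Both conditional dimensions are bounded by the transverse dimension of $Z^\pm$, giving $D\le 2K/(K+1)<2$ once the $D$ above is identified with the topological entropy of the geodesic flow restricted to $\Lambda$.
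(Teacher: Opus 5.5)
Your main argument is the paper's first proof. An adapted, homogenized quasimorphism satisfies $\phi(g)\ge\epsilon\,\ell(g)$ on closed orbits, while the Cantrell--Reyes--Sert large-deviation estimate makes such elements exponentially rare, with the concentration value forced to be $0$ by $\phi(g^{-1})=-\phi(g)$. There are two small differences. The paper applies CRS to group elements counted by displacement $d(p,gp)$ and converts to conjugacy classes ``up to a change of constants'' (choose representatives whose axes pass near $p$); you instead invoke a conjugacy-class version directly. Also, your symbolic-coding argument for existence of the limit is something the paper simply takes as well known.

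Your backup route is the paper's second proof, but as written it misapplies Ledrappier--Young. Suppose the entropy were only bounded by $\dim^u\mu+\dim^s\mu$, with each term at most $2K/(K+1)$. Then you would get $D\le 4K/(K+1)$, which is $\ge 2$ for every $K\ge 1$ and proves nothing.

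What Ledrappier--Young actually gives here is sharper. The geodesic flow on $UTM$ has the single positive Lyapunov exponent $1$ (with multiplicity $2$), so $h_\mu=\dim^u\mu$ exactly. The strong unstable leaf through $v$ is the set of outward normals to the horosphere based at $v^-$. It maps to $\CP^1-\{v^-\}$ by the forward endpoint map, which is locally bi-Lipschitz, and this carries the unstable conditional measures of any invariant measure on $\Lambda$ into $Z^+$. Hence $h_\mu\le\dim_H Z^+\le 2K/(K+1)$, and the variational principle gives $D\le 2K/(K+1)<2$. Only one of the two conditional dimensions is needed.
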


This theorem follows rather immediately as a corollary of a large deviation result
due to Cantrell--Reyes--Sert \cite{Cantrell_Reyes_Sert},
valid for {\em any} quasimorphism on any hyperbolic group and with respect to 
{\em any} metric on the group which is invariant, hyperbolic, and quasi-isometric to
a word metric (another proof should follow from Ruelle's thermodynamic formalism 
\cite{Ruelle} applied to a cocycle measuring the difference between word and
hyperbolic length, as in the work of Pollicott--Sharp \cite{Pollicott_Sharp}, with
spectral gap theorems of Parry--Pollicott \cite{Parry_Pollicott} giving the desired
exponential separation). 

We give a second proof that rests on the theory of
CaTherine wheels and which gives an explicit bound on $D$ in terms of the Hausdorff
dimension of a $G$-zipper; see \cite{Calegari_Loukidou_wheels}.

\begin{proof}
Let's denote $G=\pi_1(M)$, let's fix a basepoint $p\in \HH^3$, and for any positive
constant $T$, let's let $G_T$ denote the set of $g\in G$ with $d_{\HH^3}(p,gp)\le T$.
Fix a quasimorphism $\phi$ adapted to $X$ and for any positive $C$ let's let $G_T(\phi,C)$
denote the set of elements $g\in G_T$ with $\phi(g) > C\cdot T$. We want to know that
the exponential growth rate of $G_T(\phi,C)$ is strictly less than the exponential
growth rate of $G_T$ (which is $2$). This is a count with respect to displacement rather
than length in a conjugacy class, but it gives an upper bound up to a change of
constants. 

The main result we want to use is \cite{Cantrell_Reyes_Sert}, Theorem~6.1 which
gives lower and upper bounds for $\lim 1/T \log |G_T(\phi,C)|/|G_T|$ in terms of
the values of a function $-I$ which is the Legendre transform of $\theta$, the
spectral parameterization of the Manhattan curve. Proposition~5.10 says that 
the function $\theta$ is
strictly convex, and the gradient function $\nabla \theta$ is injective; since
the ratio goes to $1$ as $C \to 0$ it follows that for $C$ strictly positive
the ratio is bounded away from $1$ and the theorem is proved.
\end{proof}

The second proof is more directly geometric, and gives an explicit upper bound for
$D$ in terms of geometric objects associated to the flow that live on $\CP^1$.

\begin{proof}
Denote the fundamental group of $M$ by $G$. Let $\tilde{X}$ be the lifted flow to
the universal cover $\tilde{M}$. The orbit space $\OO$ of any lifted pseudo-Anosov flow
$\tilde{X}$ is homeomorphic to a plane. Since a pseudo-Anosov flow without perfect fits
is quasigeodesic, there are endpoint maps $f^\pm:\OO \to Z^\pm \subset \CP^1$ and
since there are no perfect fits, the images $Z^\pm$ are disjoint (though they are not
closed).

The endpoint maps $f^\pm$ are the coordinates of a map from $\OO$ to $Z^+\times Z^-$.
The image $P$ is homeomorphic to a plane, and is properly embedded in $\CP^1\times \CP^1 -\Delta$
and therefore we obtain a $G$-invariant properly embedded closed subset 
$$\tilde{N}:= P \times \R \subset (\CP^1 \times \CP^1 - \Delta) \times \R = UT\HH^3$$
covering a closed subset $N$ of $UTM$ invariant under the geodesic flow.
Closed orbits of $X$ are in bijection with closed orbits of the geodesic flow on $N$;
in fact, $N$ foliated by the orbits of the geodesic flow is orbit equivalent to $M,X$.
Thus we are reduced to counting the number of closed orbits in $N$ of length $\le T$.

We must now use some results from dynamics, particularly 
Ledrappier--Young \cite{Ledrappier_Young_1,Ledrappier_Young_2}.
As is well-known, the exponential growth rate of the number of orbits of a closed
flow-invariant subset of $UTM$ is the topological entropy $D$ of this flow, which is
equal to the supremum of the (measure-theoretic) entropies of the flow-invariant
measures on $N$. The Ledrappier--Young formula is elementary in this case since there is
only one Lyapunov exponent, and it says that the measure-theoretic entropy of an 
invariant measure is  equal to the Hausdorff dimension of the conditional measure 
on the unstable manifold. The dimension of this conditional measure may be bounded
by pushing it forward to $\CP^1$ by taking the endpoint of the geodesic flow, and
estimating the box dimension of the measure on the image. 

Now, the box dimension of $Z^+$ is $2$ because $Z^+$ is dense. This may be rectified
by restricting attention to the intersection of $\tilde{N}$ with any compact subset
of $UT\HH^3$ and estimating the box dimension of the image under the endpoint map. 
This may be done by using the theory of CaTherine wheels; see \cite{Calegari_Loukidou_wheels}. 
Associated to $X$ there is a
CaTherine wheel $f:S^1 \to \CP^1$ invariant under the (geometric) action of $G$ on 
$\CP^1$. Since $G$ is a cocompact Kleinian group, there is a $K$ so that for every
interval $I\subset S^1$ the image $f(I)$ is a $K$-quasidisk. Every arc in $Z^\pm$ is
contained in $\partial f(I)$ for some $I$ as above, and each of $Z^\pm$ is a
countable union of such arcs. A $K$-quasiarc has equal box and 
Hausdorff dimensions, and either is at most $2K/(K+1)$ by Astala \cite{Astala}.

The image of a compact subset of $\tilde{N}$ under the endpoint map is a finite union
of compact arcs in $Z^+$. Each of these is a quasiarc, with box dimension at most
$2K/(K+1)$, and the box dimension of a finite union is the supremum of the box
dimensions of the components. The theorem follows.
\end{proof}

\section{Computing orbit counts}\label{section:computations}

What is interesting about Theorem~\ref{theorem:orbit_bound_hyp} 
is not so much that the exponential 
growth rate of closed orbits for $X$ is strictly less than the growth rate
of the ambient space, but that it is bounded by a meaningful geometric quantity,
namely the Hausdorff dimension of $Z^\pm$. Turning this around, if we compute (or estimate)
the growth rate of closed orbits in $X$, we get a lower bound on this
Hausdorff dimension. This may be compared with a numerical estimate of
the Hausdorff dimension of $Z^\pm$ to see how good the bound is in practice.

We tried to make this comparison effective for a particular family of examples, namely
the $(0,n)$-orbifold fillings on the figure eight knot complement. These give rise
to hyperbolic orbifolds with a single order $n$ orbifold geodesic; these orbifolds
fiber over $S^1$ with fiber a torus with a single orbifold point of order $n$, and
with monodromy $(\begin{smallmatrix} 2 & 1 \\ 1 & 1 \end{smallmatrix})$.

The associated pseudo-Anosov flows $X$ have no perfect fits; topologically, they are
`the same' flow (up to orbit equivalence) on the underlying topological manifold, though
the hyperbolic lengths of the conjugacy classes of the closed orbits depend in a
complicated way on $n$.

Here is how we obtain a numerical estimate of $\dim_H(Z^+)$. We may draw approximations
to a known arc $\alpha \subset Z^+ \subset \C$ recursively to greater and greater degree of 
accuracy by computing an $\epsilon$-mesh of points $z_{j,\epsilon}$ known to lie on $\alpha$
(in the correct order), computing $L(\epsilon):=\sum_j |z_{j+1,\epsilon} - z_{j,\epsilon}|$, 
estimating a linear regression of the form
$$\log L(\epsilon) = C_1 - C_2 \log(\epsilon)$$
and then using the heuristic $\dim_H(Z^+) = \dim_H(\alpha) = 1 + C_2$.

Here is how we obtain a lower bound on $D_X$, the growth rate of orbits in $X$. Choose
a Markov partition with associated directed graph $\Gamma$. We constructed these Markov partitions with the help of the \texttt{veering} package~\cite{Veering_Codebase}. Specify a
random walk on this directed graph by choosing weights on the edges of $\Gamma$. Let $H$ be
the entropy rate of this random walk. This random walk gives a distribution on closed 
paths of length $m$ on $\Gamma$, whose entropy is asymptotic to $mH$. Closed paths in $\Gamma$
correspond to closed orbits in $M$ up to a small $\text{poly}(m)$ 
overcounting due to cyclic shifts
of the path and overcounting at the boundary of the Markov partition. The geodesic lengths of the corresponding orbits concentrate near $mL$ for some constant $L$ which can
be estimated by sampling long random walks. Therefore, for $m$ large we have constructed a distribution of entropy $mH$ supported on orbits of length $(1+o(1))mL$. It follows that
the logarithm of the number of orbits of length $mL$ is at least $\sim mH$, and $H/L$ is
a lower bound for the exponential growth rate of orbits in the hyperbolic metric. We may
optimize our lower bound by refining the Markov partition and searching for different weights
on $\Gamma$. For the computations in Figure~\ref{fig:Hausdorff_dimension_bounds}, we used a Markov partition with 6 rectangles.

We have no theoretical guarantee that our estimate of Hausdorff dimension is even close
to accurate, and indeed our numerical estimates are in conflict for lower order 
orbifold fillings; see Figure~\ref{fig:Hausdorff_dimension_bounds}.

\begin{figure}
\begin{tabular}{c|cc}
$n$ & $\dim_H(Z^+)$  & $D_X$ (lower bound)\\
\hline
2  & 1.081 & 1.108\\
3  & 1.148 & 1.182\\
4  & 1.190 & 1.213\\
5  & 1.214 & 1.227\\
6  & 1.232 & 1.241\\
10 & 1.264 & 1.252\\
50 & 1.297 & 1.259\\
\end{tabular}
\caption{Comparing a lower bound for the growth rate of the flow $D_X$ with a
heuristic estimate of the Hausdorff dimension of $Z^+$
for orbifold fillings of the figure eight knot complement.}\label{fig:Hausdorff_dimension_bounds}
\end{figure}

Two qualitative features emerge from these estimates. Firstly, both quantities seem to
be monotone increasing with $n$. Secondly, both quantities seem to approach a limit as
$n \to \infty$; the convergence rate for $\dim_H(Z^+)$ appears to be of order $O(1/n)$, while the convergence for our lower bound on $D_X$ appears faster, perhaps of order $O(1/n^2)$, though we do not know how the tightness of our lower bound changes with $n$. We have no theoretical justification for these observations.

We also computed similar lower bounds for $D_X$ for a variety of pseudo-Anosov flows on small closed hyperbolic 3-manifolds from the Hodgson--Weeks census; see Figure~\ref{fig:growth_rate_bounds}.

\begin{figure}
\begin{tabular}{l|c}
flow  & $D_X$ (lower bound)\\
\hline
\codestyle|cPcbbbdxm_10(2, -3)| & 1.028\\
\codestyle|cPcbbbiht_12(8, 1)| & 1.571\\
\codestyle|dLQacccjsnk_200(3, -1)| & 1.065\\
\codestyle|dLQbccchhfo_122(4, -1)| & 1.554\\
\codestyle|dLQbccchhsj_122(4, -3)| & 1.070\\
\codestyle|eLAkaccddjsnak_2001(3, -1)| & 1.074\\
\codestyle|eLAkbbcdddhwqj_2102(2, 1)| & 1.054\\
\codestyle|eLAkbccddhhsqs_1220(1, -3)| & 1.326\\
\codestyle|eLMkbcddddedde_2100(3, -4)(3, -2)| & 1.203\\
\codestyle|eLMkbcdddhhhdu_1221(1, 1)|& 1.082\\
\codestyle|eLMkbcdddhhhml_1221(3, -1)|& 1.443\\
\codestyle|eLMkbcdddhhqqa_1220(3, -1)|& 1.074\\
\codestyle|eLMkbcdddhhqxh_1220(3, -1)|& 1.102\\
\codestyle|eLMkbcdddhxqdu_1200(1, -4)|& 1.536\\
\codestyle|eLMkbcdddhxqlm_1200(1, -3)|& 1.103\\
\codestyle|eLPkaccddjnkaj_2002(3, -4)|& 1.031\\
\codestyle|eLPkbcdddhrrcv_1200(3, -2)|& 1.048
\end{tabular}
\caption{Lower bound for $D_X$ for pseudo-Anosov flows on several small closed 3-manifolds. The flows are specified with the conventions of \cite{Veering_Census}.}\label{fig:growth_rate_bounds}
\end{figure}

\section*{Acknowledgements}

We would like to thank Thomas Barthelm\'e, Stephen Cantrell, 
Alex Eskin, Fran\c cois Ledrappier, Kathryn Mann, Rafael Potrie, 
Sam Taylor and Amie Wilkinson for helpful comments and feedback.


\begin{thebibliography}{99}
\bibitem{Astala}
K. Astala,
\emph{Area distortion of quasiconformal mappings},
Acta Math. {\bf 173} (1994), 37--60
\bibitem{BMPZ}
T. Barthelm\'e, K. Mann, N. Paulet and A. Zalloum,
\emph{Pseudo-Anosov flows and the geometry of Anosov-like group actions},
preprint, \url{https://arxiv.org/abs/2605.12837}
\bibitem{Brooks}
R. Brooks,
\emph{Some remarks on bounded cohomology},
Ann. Math. Stud. {\bf 97}, Princeton University Press, Princeton, 1981. 53--63
\bibitem{Calegari_scl}
D. Calegari,
\emph{scl},
MSJ Memoirs {\bf 20} Mathematical Society of Japan, Tokyo, 2009.
\bibitem{Calegari_Fujiwara}
D. Calegari and K. Fujiwara,
\emph{Combable functions, quasimorphisms, and the central limit theorem},
Erg. Thy Dyn. Sys. {\bf 30} (2010), no. 5, 1343--1369
\bibitem{Calegari_ergodic}
D. Calegari,
\emph{The ergodic theory of hyperbolic groups},
Contemp. Math., {\bf 597}
AMS, Providence, RI, 2013. 15--52
\bibitem{Calegari_Loukidou_zippers}
D. Calegari and I. Loukidou,
\emph{Zippers},
Geom. Top. to appear; \url{https://arxiv.org/abs/2411.15610}
\bibitem{Calegari_Loukidou_wheels}
D. Calegari and I. Loukidou,
\emph{CaTherine wheels},
preprint, \url{https://arxiv.org/abs/2604.24619}
\bibitem{Cantrell_Reyes_Sert}
S. Cantrell, E. Reyes and C. Sert,
\emph{Growth indicators, Manhattan manifolds, and translation cones for hyperbolic groups},
preprint, to appear
\bibitem{Fenley_quasigeodesic}
S. Fenley,
\emph{Quasigeodesic pseudo-Anosov flows in hyperbolic 3-manifolds and connections with large scale geometry},
Topology {\bf 40} (2001) no. 3, 503--537
\bibitem{Fujiwara}
K. Fujiwara,
\emph{The second bounded cohomology of a group acting on a Gromov-hyperbolic space},
Proc. LMS (3) {\bf 76} (1998), no. 1, 70--94
\bibitem{Gromov}
M. Gromov,
\emph{Hyperbolic groups},
MSRI Pub. {\bf 8} Springer-Verlag, New York, 1987. 75--263
\bibitem{Jezequel_Zung}
M. J\'ez\'equel and J. Zung,
\emph{Zeta functions and the Fried conjecture for smooth pseudo-Anosov flows},
preprint, \url{https://arxiv.org/abs/2409.17014}
\bibitem{Katok}
A. Katok and B. Hasselblatt, 
\emph{Introduction to the Modern Theory of Dynamical Systems},
Cambridge University Press, Cambridge, 1995.
\bibitem{Kemeny_Snell}
J. Kemeny and L. Snell, 
\emph{Finite Markov Chains}, 
University series in Undergraduate Mathematics, Van Nostrand, 1960.
\bibitem{Landry_Minsky_Taylor}
M. Landry, Y. Minsky and S. Taylor,
\emph{Flows, growth rates, and the veering polynomial},
Erg. Thy Dyn. Sys. {\bf 43} (2023) no. 9, 3026--3107
\bibitem{Veering_Census}
A. Giannopoulos, S. Schleimer, and H. Segerman,
\emph{A census of veering structures}
\url{https://math.okstate.edu/people/segerman/veering.html}, 2019.
\bibitem{Ledrappier_Young_1}
F. Ledrappier and L.-S. Young,
\emph{The metric entropy of diffeomorphisms I Characterization of measures 
satisfying Pesin's entropy formula},
Ann. Math. (2) {\bf 122} (1985), no. 3, 509--539
\bibitem{Ledrappier_Young_2}
F. Ledrappier and L.-S. Young,
\emph{The metric entropy of diffeomorphisms II Relations between entropy, 
exponents and dimension},
Ann. Math. (2) {\bf 122} (1985), no. 3, 540--574
\bibitem{Mineyev}
I. Mineyev,
\emph{Flows and joins of metric spaces},
Geom. Top. {\bf 9} (2005), 403--482
\bibitem{Veering_Codebase}
A. Parlak, S. Schleimer, and H. Segerman
\emph{Veering 0.5, code for studying taut and veering ideal triangulations},
\url{https://github.com/henryseg/Veering}, 2026.
\bibitem{Parry_Pollicott}
W. Parry and M. Pollicott,
\emph{Zeta functions and the periodic orbit structure of hyperbolic dynamics},
Asterisque, {\bf 187--88} (1990), 1--268
\bibitem{Pollicott_Sharp}
M. Pollicott and R. Sharp, 
\emph{Comparison theorems and orbit counting in hyperbolic geometry},
Trans. AMS {\bf 350} (1998), no. 2, 473--499
\bibitem{Rhemtulla}
A. Rhemtulla,
\emph{A problem of bounded expressibility in free products},
Proc. Camb. Phil. Soc. {\bf 64} (1968), 573--584
\bibitem{Ruelle}
D. Ruelle,
\emph{Thermodynamic Formalism},
Addison Wesley, New York, 1978.
\end{thebibliography}
\end{document}